\documentclass{amsart}

\usepackage{graphicx}
\usepackage{snaptodo}
\usepackage{wrapfig}
\usepackage{pifont}
\usepackage[dvipsnames]{xcolor}

\usepackage{mathpazo}

\usepackage{amsmath,amssymb,amsfonts,amsthm,mathrsfs}
\usepackage{dsfont} \usepackage[shortlabels]{enumitem}
\usepackage[dvipsnames]{xcolor}
\usepackage{tikz}
\usepackage{tikz-cd} \usepackage{tkz-graph} \usepackage{stmaryrd} \usepackage{pgfplots}
\usepackage{setspace} \usepackage{mathtools}
\usepackage{array}
\usepackage{microtype}
\usepackage[british]{babel}
\usepackage{hyperref}
\usepackage{geometry} \usepackage[h]{esvect} \usepackage{subcaption}
\usepackage{multirow}
\usepackage{cleveref} 

\usepackage{algorithm}
\usepackage{algpseudocode}

\usepackage{longtable}
\usepackage{booktabs}

\numberwithin{equation}{section} 

\newtheorem{thm}{Theorem}[section]

\newtheorem{lemma}[thm]{Lemma}
\newtheorem{prop}[thm]{Proposition}

\newtheorem{defn}[thm]{Definition}
\newtheorem{rmk}[thm]{Remark}
\newtheorem{ex}[thm]{Example}

\newcommand{\IR}{\mathbb R}
\newcommand{\IZ}{\mathbb Z}

\newcommand{\IF}{\mathbb F}
\newcommand{\MC}[3][\ast]{\mathrm{MC}_{#1, #2}(#3)} \newcommand{\EMC}[3][\ast]{\mathrm{EMC}_{#1, #2}(#3)} \newcommand{\MH}[3][\ast]{\mathrm{MH}_{#1, #2}(#3)}
\newcommand{\EMH}[3][\ast]{\mathrm{EMH}_{#1, #2}(#3)}
\newcommand{\MHcent}[1]{C_{\mathrm{MH}}(#1)}
\newcommand{\EMHcent}[1]{C_{\mathrm{EMH}}(#1)}
\DeclareMathOperator{\im}{im}
\DeclareMathOperator{\compl}{compl}

\DeclareMathOperator{\V}{V}
\DeclareMathOperator{\E}{E}
\DeclareMathOperator{\h}{H}

\newcommand{\tick}{\Pisymbol{pzd}{51}}
\newcommand{\cross}{\Pisymbol{pzd}{55}}

\DeclarePairedDelimiter\abs{\lvert}{\rvert}

\renewcommand{\epsilon}{\ensuremath\varepsilon}

\renewcommand{\phi}{\ensuremath{\varphi}}

\hypersetup{
    colorlinks = true,
    urlcolor   = RoyalBlue,
    linkcolor  = RoyalBlue,
    citecolor  = RoyalBlue
}

\pgfplotsset{
    compat = 1.18,
} 
\usepackage{amsaddr}

\title{A Centrality Measure Using Magnitude Homology}

\author{Nadja Häusermann and Bastian Rieck}
\address{AIDOS Lab, University of Fribourg, Switzerland}
\email{\url{nadja.haeusermann@unifr.ch}, \url{bastian.grossenbacher@unifr.ch}}

\usepackage[giveninits, sortcites, maxbibnames = 99]{biblatex}
\usepackage{csquotes}
\begin{document}

\begin{abstract}
  The \emph{magnitude} of a metric space constitutes an expressive
  invariant that subsumes numerous different geometrical-topological
  invariants. Building on recent advances in \emph{magnitude homology},
  i.e., a bigraded homology theory that recovers the magnitude, we develop a novel local measure of the
  \emph{centrality} or \emph{importance} of nodes in a graph.
Our measure is inspired by the concept of relative homology as it considers the change in magnitude homology when removing a vertex. We show that our proposed measure satisfies several properties a centrality measure is reasonably expected to respect and demonstrate that we introduce a new perspective on centrality by comparing to several established centrality measures.
\end{abstract}

\maketitle

\section{Introduction}

The \emph{magnitude} of a metric space refers to a novel invariant that captures geometrical-topological aspects at different scales.
Originally developed to assess the diversity of a metric space, i.e., its effective number of points~\cite{leinster2013magnitude_metric}, numerous aspects of magnitude have been formalized, linking it to other entropy measures~\cite{leinster2021entropy_diversity}, for instance.
In parallel, magnitude has also received an extension to graphs in the form of \emph{magnitude homology}, enabling wider applicability~\cite{hepworth2017categorifying} and discriminative power.\footnote{In the sense that there are graphs that have the same magnitude but different magnitude homology~\cite{gu2018magnitude_homology_morse}.
}
Magnitude homology expands on the `expressivity' of ordinary homology by being able, among other things, to capture the diameter of `holes' in metric spaces~\cite{kaneta2021mh_metric_spaces_order_complexes} while also reflecting the uniqueness of geodesics, i.e., shortest paths in a graph~\cite{gomi2025magnitude_homology_geodesic}.
It may thus be considered a more geometric version of homology.
A more recent variation of magnitude homology, the Eulerian magnitude homology, consists of smaller homology groups while still capturing essential structures of the graph, as it has been linked to counting substructures~\cite{giusti2024eulerianMH_subgraph_structure}.
Motivated by recent uses of magnitude in machine learning and data science applications~\cite{magnitude_evaluation_diversity_latent_space, magnitude_and_generalisations_in_NN,bunch2021weighting_vectors_boundary_detection}, we develop a novel \emph{centrality measure} based on magnitude homology and its Eulerian version.

Centrality is a crucial concept in graph and network analysis, referring to the overall importance of a vertex or edge in a graph.
Leveraging counts of shortest paths and subgraph structures is often exploited by existing centrality measures~\cite{freeman1977centrality_betweenness,estrada2005subgraph_centrality}, hence we investigate how the magnitude homology, which is linked to these concepts, can be a useful tool in developing a novel type of vertex centrality. 
While there are numerous centrality measures described in the literature~(the reader is referred to \textcite{saxena2020centrality_measures_complex_networks} for a recent survey), our magnitude-based centrality measure is inspired by \emph{relative homology}. 
That is, we capture changes in magnitude homology resulting from \emph{removing} a given vertex, with the additional grading of magnitude homology being used as a natural locality parameter.
Next to proving that our graded family of centrality measures, which we refer to as \emph{(Eulerian) magnitude homology centrality}, is well-defined and respects several axioms that are desired in a centrality measure, we conduct extensive experiments to demonstrate that our measure serves as a useful local characterization of a graph.

We provide the required background about magnitude homology and Eulerian magnitude homology in \Cref{sec:background}. Then, in \Cref{sec:magnitude_homology_centrality} we propose our novel node centrality measure based on the (Eulerian) magnitude homology and show that it is local. To motivate the use of the magnitude homology centrality, we study in \Cref{sec:axioms} what axioms of centrality measures are satisfied. Finally, in \Cref{sec:results} we demonstrate the proposed centrality measure on concrete examples and compare to established centrality measures.
\section{Background: Magnitude Homology and Eulerian Magnitude Homology}
\label{sec:background}

In order to using metric space magnitude on graphs, we equip graphs with their shortest-path distance. Due to the shortest-path metric being integer-valued, the magnitude of graphs can be viewed as an integer power series \cite{leinster2019magnitude_graph}. 
\textcite{hepworth2017categorifying} introduced the magnitude homology of graphs, a bigraded homology theory which recovers the magnitude by taking the graded Euler characteristic.  
In this section, we recall the necessary definitions and properties for defining and studying the magnitude homology centrality. We further give the definition of a variant of the magnitude homology introduced in \textcite{giusti2024eulerianMH_subgraph_structure}, the Eulerian magnitude homology. Since we define a homology, we assume a basic familiarity with the algebraic notion of homology through a chain complex. For a formal treatment of these concepts, we refer to \textcite{weibel2013homological_algebra}.

We first establish preliminary notions for~(Eulerian) magnitude
homology. Subsequently, we denote an undirected graph by $G = (V, E)$,
with $V$ being the \emph{vertices} and $E \subseteq V \times V$ being
the edges, we denote an edge between vertices $u,v \in V$ by $(u,v)$. In particular, we assume that $G$ is
\emph{simple}, i.e., it has no self-loops and no multiple edges.
For any integer $k\geq 0$ we denote by $[k]$ the set $\{1, \dots ,k\}$.

\subsection{Magnitude Chains and Eulerian Magnitude Chains}

Since the magnitude is based on a metric space, we have to equip our graphs with a metric. For this work, we will be using the shortest-path metric, as this will produce a bigraded vector space in homology indexed by $\IZ_{\geq 0}$, allowing for a simpler computation.

\begin{defn}[Shortest-path distance in a graph]
    For any graph $G=(V,E)$ we denote by $d\colon V\times V \to
    [0,\infty]$ the shortest-path distance. That is, $d(u,v) = \infty$ for $u,v \in V$ if $u$ and $v$ are not in the same connected component and otherwise the distance $d(u,v)$ is the number of edges in a shortest path from $u$ to $v$. 
\end{defn}

The magnitude homology has also been extended to general metric spaces that do not necessarily arise from graphs with the shortest-path distance \cite{leinster2021magnitude_homology_enriched_cat_metric_space}, hence it is also possible to use other graph distances, but we leave this extension for future work.

Chains in the magnitude homology group correspond to paths of a given length that have to pass through a sequence of vertices, called landmarks, where they have to use a shortest path between two consecutive landmarks.
\begin{defn} [Length of a tuple in a graph]
    The length $\ell$ of a tuple $(x_0,\dots,x_k) \in V^{k+1}$ of vertices of a graph $G$ is the sum of the shortest-path distance between consecutive vertices
\begin{equation}
      \ell(x_0,\dots,x_k) = \sum_{i=0}^{k-1} d(x_i,x_{i+1}).
    \end{equation}
Note that for $k=0$, the length of a tuple $(x_0)$ is $\ell(x_0) = 0$.
\end{defn}

\begin{defn}[Magnitude chains]
    The magnitude chain group of a graph $G$ in bidegree $(k,l)$ for $k, l \geq 0$ is the $\mathbb{F}_2$-vector space $\MC[k]{l}{G}$ generated by the elements $(x_0, \dots, x_k) \in V^{k+1} $ such that $\forall i \in \{0, \dots k\} \; x_i \neq x_{i+1}$, i.e., consecutive vertices in the tuple are distinct, and $\ell(x_0, \dots, x_k) = l$.
\end{defn}

As opposed to only the \emph{consecutive} vertices being distinct in the definition above of the magnitude chain group, there exists also a version where \emph{all} vertices in a generator have to be distinct, leading to the Eulerian magnitude homology \cite{giusti2024eulerianMH_subgraph_structure}.

\begin{defn}[Eulerian magnitude chains]
    The Eulerian magnitude chain group of a graph $G$ in bidegree $(k,l)$ for $k, l \geq 0$ is the $\mathbb{F}_2$-vector space $\EMC[k]{l}{G}$ generated by the elements $(x_0, \dots, x_k) \in V^{k+1}$ such that $\forall\, 0 \leq i \neq j \leq k, x_i \neq x_j$ and $\ell(x_0, \dots, x_k) = l.$
\end{defn}

Note that the~(Eulerian) magnitude chain group is usually defined
    as the free abelian group on the same generators, meaning that they are introduced as above but with $\IZ$ coefficients instead of $\IF_2$ coefficients. In our paper, following standard practice in computational topology, we
    use $\mathbb{F}_2$ coefficients for easier computation; we will also
    make use of the vector space properties of $\mathbb{F}_2$ to show
    the \emph{locality} of our magnitude homology centrality measure.

\subsection{Magnitude Homology and Eulerian Magnitude Homology}

To obtain a \emph{chain complex}, we need differentials between the magnitude chain groups, relating the chains in different degrees with each other. \textcite{hepworth2017categorifying} show that the maps defined below satisfy $\partial \circ \partial = 0$, hence they do define a differential on the~(Eulerian) magnitude chain groups. 

\begin{defn}[Differential on magnitude chains]
    For a graph $G$ and integers $l\geq 0, k \geq 1$ we define the differential
    \begin{equation*}
    \partial \colon \MC[k]{l}{G} \to \MC[k-1]{l}{G}
  \end{equation*}
  by the alternating sum $\partial = \sum_{i=1}^{k-1}
  \partial_i$, where $\partial_i \colon \MC[k]{l}{G} \to \MC[k-1]{l}{G}$
  is defined on the generators by
  \begin{equation}
    \label{eq definition differential summands}
    \partial_i (x_0,\dotsc,x_k) =
      \begin{cases}
          (x_0,\dots,\widehat{x_i},\dots,x_k) \quad & \text{if } \ell(x_0,\dotsc,\widehat{x_i},\dotsc,x_k) = l \\
          0 &\text{otherwise}
      \end{cases}
    \end{equation}
and then linearly extended to the whole space $\MC[k]{l}{G}$.
Following standard conventions, $\widehat{x_i}$ indicates that
    vertex $x_i$ is \emph{excluded} from a tuple.
\end{defn}

\begin{defn}[Magnitude chain complex]
    The magnitude chain complex $\MC{\ast}{G}$ of a graph $G$ is the direct sum of chain complexes, i.e.,
    \begin{equation*}
      \bigoplus_{l\geq 0} \MC{l}{G}.
    \end{equation*}
\end{defn}

We only consider one component of the magnitude chain complex of
    a graph $G$ at a time. Hence, for simplicity, we will also refer to
    the chain complex $\MC{l}{G}$ for a fixed $l \geq 0$ as \emph{the}
    magnitude chain complex of $G$.

We are now ready to define the (Eulerian) magnitude homology. The general idea of a homology is to consider only the chains that are \emph{cycles}, i.e., the chains being zero under the differential, where we quotient out the \emph{boundaries}, i.e., the chains that are in the \emph{image} of the differential. The idea behind taking the quotient of the boundaries is to view two cycles as the same if they differ only by a boundary. Considering this relation gives interesting insights about the structure of the underlying object being studied. In the case of singular homology of a topological space, this construction allows us to count the connected components and holes of the space. The interpretation of the magnitude homology is still being explored. For instance, we know that we recover the number of edges of the underlying graph with the magnitude homology and that there are connections to the girth and uniqueness of geodesics of the graph \cite{asao2024girth_mh_diagonality, gomi2025magnitude_homology_geodesic}. More generally, the magnitude homology of metric spaces is linked to the convexity and diameter of holes of the underlying metric space \cite{leinster2021magnitude_homology_enriched_cat_metric_space, kaneta2021mh_metric_spaces_order_complexes}.

\begin{defn}[Magnitude homology]
    The magnitude homology $\MH{\ast}{G}$ of a graph $G$ is the
    bigraded vector space defined by taking the homology
\begin{equation*}
      \MH[k]{l}{G} = \h_k(\MC{l}{G}).
    \end{equation*}
    That is, the quotient \[
    \MH[k]{l}{G} = \ker(\partial: \MC[k]{l}{G} \to \MC[k-1]{l}{G}) / \im(\partial: \MC[k+1]{l}{G} \to \MC[k]{l}{G}). \]
\end{defn}

For small degrees, one can easily compute the magnitude homology groups, as shown by the following result, which will help us interpret the proposed centrality measures.
\begin{prop}[{\textcite[Proposition 9]{hepworth2017categorifying}}]
\label{prop:mh_degree_00_11}
    For any graph $G$, the magnitude homology satisfies:
        \begin{enumerate}[i)]
        \item $\MH[0]{0}{G} \cong \langle \V(G) \rangle$;
        \item $\MH[1]{1}{G} \cong \langle \vv{\mathrm{E}}(G) \rangle$;
    \end{enumerate}
where $\vv{\E}$ denotes the set of oriented edges of $G$, that is, for every undirected edge $(u,v)$ of $G$, the set $\vv{\E}$ contains both directions $(u,v), (v,u) \in \vv{\E}$. 
It immediately follows that
    \begin{enumerate}[i)]
        \item $\dim(\MH[0]{0}{G}) = \# \V(G)$;
        \item $\dim(\MH[1]{1}{G}) = 2 \cdot \# \E(G)$.
    \end{enumerate}
\end{prop}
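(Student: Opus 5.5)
We compute both groups directly from the chain complexes $\MC{0}{G}$ and $\MC{1}{G}$. For each one we identify the chain groups in the relevant degrees and check that the neighbouring differentials contribute nothing. The dimension statements then follow at once. Each undirected edge $(u,v)$ of the simple graph $G$ gives exactly two ordered pairs, so $\#\vv{\E}(G) = 2\cdot\#\E(G)$.

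\emph{Degree $(0,0)$.} First we find which tuples have length $0$. A tuple $(x_0,\dots,x_k)$ with consecutive entries distinct has $d(x_i,x_{i+1})\geq 1$ for each $i$, so its length is at least $k$. Hence $\MC[k]{0}{G}=0$ for all $k\geq 1$, and in particular $\MC[1]{0}{G}=0$. This means there are no boundaries in degree $0$. The differential out of degree $0$ is zero because $\MC[-1]{0}{G}=0$. It follows that $\MH[0]{0}{G}=\MC[0]{0}{G}$, which is spanned by the one-tuples $(x_0)$ with $x_0\in\V(G)$. This gives $\langle \V(G)\rangle$.

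\emph{Degree $(1,1)$.} The generators of $\MC[1]{1}{G}$ are pairs $(x_0,x_1)$ with $x_0\neq x_1$ and $d(x_0,x_1)=1$. These are exactly the ordered pairs of adjacent vertices, that is, $\vv{\E}(G)$. The differential $\partial=\sum_{i=1}^{k-1}\partial_i$ is an empty sum when $k=1$, so every element of $\MC[1]{1}{G}$ is a cycle. By the length bound above, $\MC[2]{1}{G}=0$, since any $2$-chain has length at least $2$. So there are no boundaries in degree $(1,1)$, and $\MH[1]{1}{G}=\MC[1]{1}{G}=\langle\vv{\E}(G)\rangle$.

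No step here is a real obstacle. The only point needing care is the lower bound $\ell(x_0,\dots,x_k)\geq k$, which kills the chain groups $\MC[k]{l}{G}$ with $k>l$. It follows directly from the requirement that consecutive entries be distinct, together with the fact that the shortest-path distance between distinct vertices is at least $1$ (or $\infty$, in which case no chain of finite length exists). A secondary point is that $\partial$ on $\MC[1]{1}{G}$ is zero because the sum is empty, rather than for any structural reason.
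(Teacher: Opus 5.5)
Your proof is correct. The paper gives no proof of its own and only cites Proposition~9 of Hepworth and Willerton; your direct chain-level check is the standard argument behind that result: the bound $\ell(x_0,\dots,x_k)\geq k$ forces $\MC{0}{G}$ and $\MC{1}{G}$ to be concentrated in degrees $0$ and $1$ respectively, so homology equals chains there. Because each complex sits in a single degree, your argument does not depend on the coefficients and applies verbatim to the $\IF_2$ coefficients used in this paper.
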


The non-zero cycles in magnitude homology include magnitude chains that alternate between adjacent vertices in the graph. For certain classes of graphs and in certain bidegrees, these are in fact the only types of non-zero magnitude homology classes \cite{giusti2024eulerianMH_subgraph_structure}. 
Based on this observation, the Eulerian magnitude homology was introduced in order to remove those cycles that do not provide new information \cite{giusti2024eulerianMH_subgraph_structure}. 

\begin{defn}[Eulerian magnitude homology]
    We can define a differential on the Eulerian magnitude chain groups
    analogously to the differential on the magnitude chain groups, 
    obtaining the Eulerian magnitude chain complex $\bigoplus_{l\geq
    0} \EMC{l}{G}$ with the same convention as for the standard case.
    The Eulerian magnitude homology $\EMH{\ast}{G}$ of a graph
    $G$ is the bigraded vector space defined by the homology groups
    \begin{equation*}
      \EMH[k]{l}{G} = \h_k(\EMC{l}{G}).
    \end{equation*}
\end{defn}

For small $k$, the standard and Eulerian magnitude homology agree, as the following result shows.

\begin{lemma}
    For $k= 0, 1$, any graph $G$, and $l \geq 0$, magnitude homology and Eulerian magnitude homology are isomorphic, i.e., $\MH[k]{l}{G}
    \cong \EMH[k]{l}{G}$.
\end{lemma}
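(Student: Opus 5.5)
The plan is to show that in low degrees the two chain complexes coincide exactly, and then compare the homology groups term by term. For $k=0$ a generator is a single vertex $(x_0)$, so both $\MC[0]{l}{G}$ and $\EMC[0]{l}{G}$ are spanned by the same tuples. For $k=1$ a generator is a pair $(x_0,x_1)$; the magnitude condition $x_0\neq x_1$ and the Eulerian condition that all entries be distinct are the same requirement. Hence $\MC[k]{l}{G}=\EMC[k]{l}{G}$ for $k=0,1$ and all $l\ge 0$. Moreover $\EMC[k]{l}{G}\subseteq \MC[k]{l}{G}$ for every $k$, since all-distinct implies consecutive-distinct.

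Next I would check that the differentials agree. The Eulerian differential is given by the same formula as the magnitude one. So the inclusion $\iota\colon \EMC{l}{G}\hookrightarrow \MC{l}{G}$ is a chain map, provided deleting an interior entry of an all-distinct tuple leaves an all-distinct tuple, which is clear. In particular $\partial\colon \MC[1]{l}{G}\to\MC[0]{l}{G}$ is literally the same map as the Eulerian one; in fact the sum $\sum_{i=1}^{0}$ is empty, so it is zero in both cases. It follows that $\ker\partial$ in degrees $0$ and $1$ is the same subspace in both theories.

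The main step is to compare the images of $\partial$ coming from degrees $1$ and $2$. In degree $0$ the incoming differential is zero in both complexes, so $\MH[0]{l}{G}=\MC[0]{l}{G}=\EMH[0]{l}{G}$. In degree $1$, the magnitude image is spanned by $\partial(x_0,x_1,x_2)=\partial_1(x_0,x_1,x_2)$ over generators with $x_0\neq x_1\neq x_2$. The Eulerian image uses only those generators with $x_0\neq x_2$ in addition. If $x_0=x_2$, then $\partial_1$ would produce $(x_0,x_0)$, which is not a generator; also its length $0$ is not $l=2d(x_0,x_1)>0$, so the term is zero. Hence the extra generators of $\MC[2]{l}{G}$ contribute nothing, the two images coincide, and $\MH[1]{l}{G}=\EMH[1]{l}{G}$.

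The only delicate point is this last check, that tuples $(x,y,x)$ have zero boundary. It is settled by the length argument above, since $\ell(x,x)=0<2d(x,y)$. The conclusion can be phrased as saying that $\iota$ induces an isomorphism on $\h_k$ for $k=0,1$.
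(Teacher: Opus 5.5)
Your proof is correct and follows the same idea as the paper's (the two chain complexes agree in low degrees), but it is more complete: the paper only observes that the chain groups coincide in degrees $0$ and $1$, which by itself does not determine the degree-$1$ homology, since $\MC[2]{l}{G}$ contains the extra generators $(x,y,x)$ that are absent from $\EMC[2]{l}{G}$. Your observation that these extra generators have zero boundary, because $\ell(x,x)=0\neq 2d(x,y)=l$, is exactly the step the paper omits, and it is what shows that the images of $\partial$ in $\MC[1]{l}{G}=\EMC[1]{l}{G}$ coincide.
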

\begin{proof}
    The statement follows immediately from the definition: For $k=0,1$, the magnitude and Eulerian magnitude chains consist of tuples with one or two vertices. In this case,
    there is no difference whether we require only consecutive vertices or all pairs of vertices to be distinct. Hence, the Eulerian and standard magnitude chain groups are isomorphic and, as a consequence, so are their resulting homologies.
\end{proof}

\begin{ex}
    We provide a brief worked example of magnitude chains and their behaviour in (Eulerian) magnitude homology, the relevant graph and magnitude chains with length $l=3$ and $k=2$ are demonstrated in \Cref{fig:ex_mc}. 
    
    Consider first the magnitude chain $(0,2,3)$ in \Cref{fig:ex_mc1}. To understand its differential, we need to study what happens if we remove the middle vertex, $2$. Removing it leaves the chain $(0,3)$, with the dotted green path still being a shortest path from $0$ to $3$. Hence, the length of the tuple does not change and $\partial(0,2,3) = (0,3) \neq 0$. In other words, $(0,2,3)$ is \emph{not} a cycle in (Eulerian) magnitude homology. If we imagine the dotted grey edge $(1,3)$ to be part of the graph, the situation would change. In that case, without the condition that the chain must pass through the landmark $2$, the shortest path between $0$ and $3$ only has length $2$, rendering the differential of $(0,2,3)$ to be $0$. Hence, if all vertices in the tuple are relevant in the sense that there is a shorter path if we do not enforce to pass through all landmarks, then the generator is a cycle.
    
    Let us now consider the chain $(1,2,4)$, as highlighted in \Cref{fig:ex_mc2}. It is a cycle in (Eulerian) magnitude homology since the red edge $(1,4)$ ensures that removing the vertex $2$ strictly decreases the length of the tuple. In standard magnitude homology, $(1,2,4)$ is the image under the differential of $(1,2,1,4)$, hence the chain $(1,2,4)$ will be trivial in homology. Meanwhile, in Eulerian magnitude homology, no repeating vertices are allowed, therefore $(1,2,1,4)$ is not an Eulerian magnitude chain and one can check that there is in fact no Eulerian magnitude chain in with image $(1,2,4)$ under the differential. It follows that $(1,2,4)$ is non-trivial in Eulerian magnitude homology.
\begin{figure}[h!]
    \begin{subfigure}[t]{0.45\textwidth}
        \includegraphics[width=0.8\textwidth]{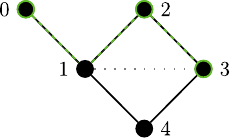}
        \caption{The chain $(0,2,3)$ does not have vanishing differential in the (Eulerian) magnitude chain complex.}
        \label{fig:ex_mc1}
    \end{subfigure} \quad
    \begin{subfigure}[t]{0.45\textwidth}
        \includegraphics[width=0.8\textwidth]{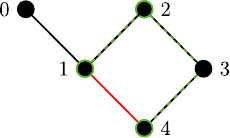}
        \caption{The chain $(1,2,4)$ is a cycle of the (Eulerian) magnitude chains that is trivial in standard magnitude homology, but not in Eulerian magnitude homology.}
        \label{fig:ex_mc2}
    \end{subfigure}
    \caption{Examples of (Eulerian) magnitude chains in bidegree $k=2$, $l=3$ with the vertices in the chain highlighted in green and one underlying walk demonstrated with the dotted green edges.}
    \label{fig:ex_mc}
\end{figure}
\end{ex}

\subsection{Induced Maps}

We study later how the inclusion of a subgraph into the whole graph changes the (Eulerian) magnitude homology. To do this, we need to briefly introduce how maps of graphs induce maps in homology. Formally speaking, this describes how the magnitude homology defines a functor from the category of graphs to the category of vector spaces. This section follows the work by \textcite[Section 3]{hepworth2017categorifying}.

First, we fix the type of maps between graphs that can induce maps in homology. Specifically, the right notion turns out to be a map between graphs that sends an edge to an edge or collapses it to a point. 
\begin{defn}[Map of graphs]
    For graphs $G, H$, we define a map of graphs $f\colon G \to~H$ to be a map $f\colon \V(G) \to \V(H)$ on the vertex sets such that
\begin{equation*}
    \forall{x, y} \in \E(G) \colon (f(x), f(y)) \in \E(H) \textrm{ or } f(x) = f(y).
    \end{equation*}
\end{defn}
This definition has an equivalent characterization in terms of the shortest-path distance of the graphs. Namely, a map between graphs is non-increasing in terms of the shortest-path distance.
\begin{rmk}
\label{rmk maps of graph decrease distance}
    Equivalently, we can define a map of graphs $f\colon G \to H$ to be a map on the vertex sets $f\colon \V(G) \to \V(H)$ such that \[
    \forall x,y \in \V(G) \qquad d_H(f(x),f(y)) \leq d_G(x, y).
    \]
\end{rmk}

A map of graphs induces a map on the magnitude chains by mapping each vertex in the chain to its image if that preserves the length of the tuple, otherwise we send it to zero.

\begin{defn}[Induced chain map]
    If $f\colon G \to H$ is a map of graphs, the induced chain map $f_\#\colon  \MC{\ast}{G} \to \MC{\ast}{H}$ is defined on generators by \[
    f_\#(x_0,\dots,x_k) = \begin{cases}
        (f(x_0),\dots,f(x_k)) \quad &\text{if } \ell(f(x_0),\dots,f(x_k)) = \ell(x_0,\dots,x_k) \\
        0 &\text{else}.
    \end{cases}
    \]
\end{defn}

The induced map $f_\#$ defined above is indeed a chain map, that is, the relation $f_\# \circ \partial = \partial \circ f_\#$ is satisfied in every bidegree and hence we obtain a map in homology.

\begin{defn}[Induced map in homology]
    Let $f\colon G\to H$ be a map of graphs. The induced map in homology is the map \[
    f_\ast\colon \MH{\ast}{G} \to \MH{\ast}{H}
    \]
    induced by the chain map $f_\#$.
\end{defn}

\begin{rmk}
\label{rmk induced map linear}
    Note that we define $f_\#$ on the basis of the magnitude chain vector space, hence it extends to a linear map between vector spaces and thus the induced map in homology is also a linear map between the magnitude homology vector spaces.
\end{rmk}

In the setting of Eulerian magnitude homology, the maps of graphs that induce chain maps are not allowed to be contractions \cite{caputi2025emh_diagonality_injective_words_regular_path_homology}. This does not pose any problems for our purposes are we are only using the induced maps of inclusions, which also satisfy this stronger property.

\section{A Centrality Measure Based on Magnitude Homology}
\label{sec:magnitude_homology_centrality}

In this section, we propose the (Eulerian) magnitude homology centrality, a novel centrality measure on vertices of undirected graphs. We will show that it is a local measure, where the size of the neighbourhood that influences the centrality measure is determined by the grading obtained from the (Eulerian) magnitude homology. 

\subsection{Definition of the (Eulerian) Magnitude Homology Centrality}
We start by introducing some necessary notations.

\begin{figure}
    \centering
    \includegraphics[width=0.9\linewidth]{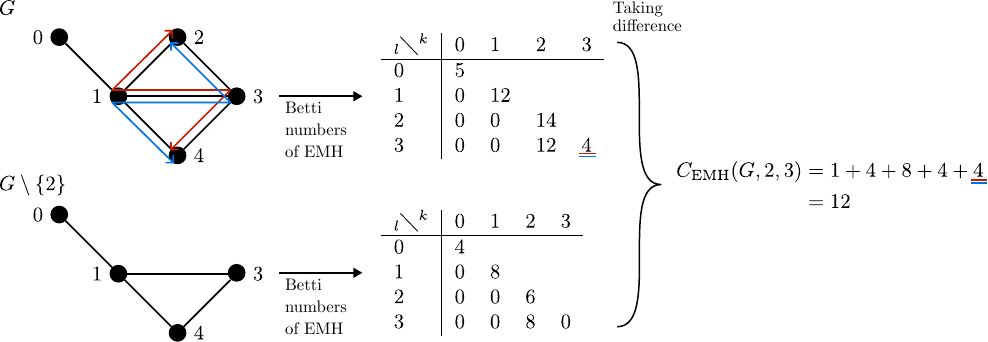}
    \caption{Computation of the  Eulerian magnitude homology centrality. To compute the centrality of the vertex $2 \in G$ we consider the Betti numbers of $G$ and its subgraph $G\setminus \{2\}$. For each bidegree up to a given $l$ ($l=3$ in the example), we compute the difference of the Betti numbers and add them up to obtain the Eulerian magnitude homology centrality $\EMHcent{G,2,3}$. By removing the vertex $2$, the blue and red generators in both directions that determine the dimension $\EMH[3]{3}{G}$ get destroyed, resulting in the summand $4$ of the Eulerian magnitude homology centrality.}
    \label{fig:overview_cent}
\end{figure}

\begin{defn}
    Let $G = (V, E)$ be a graph and $A \subset V$ a subset. We denote by $G \setminus A$ the graph induced by the subset $V \setminus A$ on $G$.
\end{defn} 

\begin{defn}[Betti numbers]
    We denote the Betti numbers of the (Eulerian) magnitude homology in bidegree $(k,l)$ of a graph $G$ by $\beta_{k, l}(G) = \dim(\MH[k]{l}{G})$ and $\beta_{k, l}^E(G) = \dim(\EMH[k]{l}{G})$.
\end{defn}

With these notions in place, we now define our (Eulerian) magnitude homology centrality. It is a centrality measure that tracks how the removal of a vertex affects the magnitude homology of the graph. This concept is inspired by the local homology of a topological space, which is defined via its relative homology. Notice, however, that a completely analogous definition to the relative homology is not possible due to the fact that the (Eulerian) magnitude chain complex of a subgraph is not a subcomplex of the (Eulerian) magnitude chain complex of the whole graph because the shortest path distance can change by going from the subgraph to the whole graph. In \Cref{fig:overview_cent}, we show how the Eulerian magnitude homology centrality is computed, the standard magnitude homology centrality is completely analogous.

\begin{defn}[(Eulerian) magnitude homology centrality]
    The (Eulerian) magnitude homology centrality $C_{\mathrm{MH}}(G, v, l)$ and $C_{\mathrm{EMH}}(G, v, l)$, respectively, of a vertex $v$ from a graph $G = (V,E)$ and in degree $l$ is defined to be the sum of the differences in Betti numbers \begin{align}
        \label{eq:defn_MHC}
         \MHcent{G, v, l} &= \sum_{l' = 0}^{l} \sum_{k' = 0}^{l'} (\beta_{k', l'}(G) - \beta_{k', l'}(G \setminus \{v\})) \\
         \label{eq:def_EMHC}
         \EMHcent{G, v, l} &= \sum_{l' = 0}^{l} \sum_{k' = 0}^{l'} (\beta_{k', l'}^E(G) - \beta_{k', l'}^E(G \setminus \{v\}))
\end{align}   
\end{defn}

Note that the summand $k'$ in the definition essentially varies over $\IZ_{\geq 0}$, but since (Eulerian) magnitude homology only has non-zero dimension in bidegrees $(k,l)$ with $k \leq l$ \cite[Proposition 10]{hepworth2017categorifying}, we already restrict to the finitely many possibly non-zero summands in the definition.

As an immediate consequence of \Cref{prop:mh_degree_00_11}, we can compute the value of the (Eulerian) magnitude homology centrality for $l=1$. We obtain that this case recovers the degree of the vertex, which is being used as a centrality measure \cite{saxena2020centrality_measures_complex_networks}.
\begin{prop}
\label{prop:EMHC_l=0}
    For a graph $G=(V,E)$ and any vertex $v \in V$, the (Eulerian) magnitude centrality $C_{\mathrm{MH}}(G,v,1) = C_{\mathrm{EMH}}(G,v,1) = 1+2\cdot\deg_G(v)$.
\end{prop}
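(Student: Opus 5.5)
Unwinding the definition for $l=1$, the double sum in \eqref{eq:defn_MHC} runs over the bidegrees $(0,0)$, $(0,1)$ and $(1,1)$. So
\[
C_{\mathrm{MH}}(G,v,1) = \bigl(\beta_{0,0}(G)-\beta_{0,0}(G\setminus\{v\})\bigr) + \bigl(\beta_{0,1}(G)-\beta_{0,1}(G\setminus\{v\})\bigr) + \bigl(\beta_{1,1}(G)-\beta_{1,1}(G\setminus\{v\})\bigr),
\]
and the analogous identity holds for $C_{\mathrm{EMH}}$ with the Eulerian Betti numbers. All bidegrees here have $k\le 1$, so the lemma identifying standard and Eulerian magnitude homology for $k=0,1$ gives $\beta_{k,l}=\beta^E_{k,l}$ for both $G$ and $G\setminus\{v\}$. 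It therefore suffices to treat $C_{\mathrm{MH}}$, and the Eulerian equality follows at once.

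First I dispose of the bidegree $(0,1)$. The chain group $\MC[0]{1}{H}$ is generated by $1$-tuples $(x_0)$ of length $1$, but every $1$-tuple has length $0$, so the group is zero for any graph $H$. Hence $\beta_{0,1}(G)=\beta_{0,1}(G\setminus\{v\})=0$. (Alternatively one can cite the vanishing of magnitude homology off the range $k\le l$, but the direct check is simpler.)

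For the other two bidegrees I use \Cref{prop:mh_degree_00_11}. It gives $\beta_{0,0}(H)=\#\V(H)$ and $\beta_{1,1}(H)=2\#\E(H)$ for any graph $H$. Since $G\setminus\{v\}$ is the induced subgraph on $V\setminus\{v\}$, it has exactly one vertex fewer than $G$, and it has lost precisely the edges incident to $v$, of which there are $\deg_G(v)$ because $G$ is simple. The differences are therefore $1$ and $2\deg_G(v)$, and summing gives $1+2\deg_G(v)$.

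There is no real obstacle. The only point needing care is confirming that the $(0,1)$ term vanishes and that the induced-subgraph convention removes exactly the $\deg_G(v)$ edges at $v$, so that \Cref{prop:mh_degree_00_11} can be applied directly to both $G$ and $G\setminus\{v\}$.
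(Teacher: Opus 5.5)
Your proposal is correct and takes the same route as the paper, which reads off the $(0,0)$ and $(1,1)$ differences directly from \Cref{prop:mh_degree_00_11}. You also spell out two points that the paper's one-line proof leaves implicit: the $(0,1)$ summand vanishes because $\MC[0]{1}{H}=0$, and the Eulerian value equals the standard one by the lemma identifying the two homologies for $k\le 1$.
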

\begin{proof}
    By \Cref{prop:mh_degree_00_11}, we know that $\beta_{0, 0}(G) - \beta_{0, 0}(G \setminus \{v\}) = 1$ because one vertex is removed and $\beta_{1, 1}(G) - \beta_{1, 1}(G \setminus \{v\}) = 2\cdot\deg(v)$.
\end{proof}

We note that the summands in \eqref{eq:defn_MHC} and \eqref{eq:def_EMHC} can be negative, that is, it can happen that $\beta_{k,l}(G\setminus \{v\}) > \beta_{k,l}(G)$. For example, in the Icosahedral graph $\mathcal{I}$ for any vertex $v \in \mathcal{I}$, the difference in Betti numbers is $\beta_{2,3}(\mathcal{I}) - \beta_{2,3}(\mathcal{I} \setminus \{v\}) = 0 - 10$ and for the Eulerian case $\beta_{2,3}^E(\mathcal{I}) - \beta_{2,3}^E(\mathcal{I} \setminus \{v\}) = 0 - 30$. We leave it to future work to investigate when this phenomenon occurs. 

\begin{ex}
    Let us consider a small example and calculate the (Eulerian) magnitude centrality for the four-cycle $C_4$ graph, see \Cref{fig:four_cycle}. 
    \begin{figure}[h]
    \begin{subfigure}[t]{0.45\textwidth}
        \centering
        \begin{tikzpicture}[scale = 1]
    \GraphInit[vstyle=Classic]
    \SetVertexMath
    \SetGraphUnit{1}
    \tikzset{VertexStyle/.append style={minimum size=6pt}}
    \Vertex[Lpos=180]{0}
    \SO[Lpos=180](0){1}
    \EA(0){3}
    \SO(3){2}

    \Edges(0, 1, 2, 3, 0)
    
\end{tikzpicture}         \caption{Four-cycle.}
        \label{fig:four_cycle}
    \end{subfigure}
    \begin{subfigure}[t]{0.45\textwidth}
        \centering
        \begin{tikzpicture}[scale = 1]
    \GraphInit[vstyle=Classic]
    \SetVertexMath
    \SetGraphUnit{1}
    \tikzset{VertexStyle/.append style={minimum size=6pt}}
    \Vertex[Lpos=180]{1}
    \EA(1){2}
    \NO(2){3}

    \Edge(1)(2)
    \Edge(2)(3)
    
\end{tikzpicture}         \caption{Four-cycle minus the vertex $v=0$.}
        \label{fig:C4_minus_0}  
    \end{subfigure}
    \caption{The four-cycle $C_4$ for which we compute the (Eulerian) magnitude homology centrality and its subgraph, the path graph on three vertices, when deleting a vertex.}
    \end{figure}
     
    The Betti numbers of the magnitude homology and Eulerian magnitude homology up to $k=l=4$ are given in \Cref{tab:betti_MH_C4} and \Cref{tab:betti_EMH_C4}, the Betti numbers of standard magnitude homology are known~\cite{gu2018magnitude_homology_morse}, we did compute all Betti numbers using computer calculations~(cf.\  \Cref{sec:computation_EMH}). 
    \begin{table}[h!]
    \parbox{.45\linewidth}{
    \centering
    \begin{tabular}{l|lllll}
    $_l \diagdown ^k$ & $0$ & $1$ & $2$ & $3$ & $4$ \\ \hline
    $0$ & $4$ &  &  &  &  \\
    $1$ & $0$ & $8$ &  &  &  \\
    $2$ & $0$ & $0$ & $12$ &  &  \\
    $3$ & $0$ & $0$ & $0$ & $16$ &  \\
    $4$ & $0$ & $0$ & $0$ & $0$ & $20$ 
    \end{tabular}
    \caption{Betti numbers of the magnitude homology of $C_4$.}
    \label{tab:betti_MH_C4}
    }
    \hfill\parbox{.45\linewidth}{
    \centering
    \begin{tabular}{l|lllll}
    $_l \diagdown ^k$ & $0$ & $1$ & $2$ & $3$ & $4$ \\ \hline
    $0$ & $4$ &  &  &  &  \\
    $1$ & $0$ & $8$ &  &  &  \\
    $2$ & $0$ & $0$ & $4$ &  &  \\
    $3$ & $0$ & $0$ & $8$ & $0$ &  \\
    $4$ & $0$ & $0$ & $0$ & $8$ & $0$ 
    \end{tabular}
    \caption{Betti numbers of the Eulerian magnitude homology of $C_4$.}
    \label{tab:betti_EMH_C4}   
    }
\end{table} 
Removing any vertex from the four cycle graph leaves the path graph $P_3$ on three vertices. For our example, say we choose to compute $\MHcent{C_4, 0, l}$ for $l \leq 4$, hence we remove the vertex $v=0$, as can be seen in \Cref{fig:C4_minus_0}. 
The Betti numbers of the magnitude and Eulerian magnitude homologies of $C_4 \setminus \{0\}$ can be seen in \Cref{tab:Betti_MH_three_path} and \Cref{tab:Betti_EMH_three_path}.
\begin{table}[h!]
\parbox{.475\linewidth}{
    \centering
    \begin{tabular}{l|lllll}
    $_l \diagdown ^k$ & $0$ & $1$ & $2$ & $3$ & $4$ \\ \hline
    $0$ & $3$ &  &  &  &  \\
    $1$ & $0$ & $4$ &  &  &  \\
    $2$ & $0$ & $0$ & $4$ &  &  \\
    $3$ & $0$ & $0$ & $0$ & $4$ &  \\
    $4$ & $0$ & $0$ & $0$ & $0$ & $4$ 
    \end{tabular}
    \caption{Betti numbers of the magnitude homology of $C_4 \setminus \{0\}$.}
    \label{tab:Betti_MH_three_path}}
\hfill
\parbox{.475\linewidth}{
    \centering
    \begin{tabular}{l|lllll}
    $_l \diagdown ^k$ & $0$ & $1$ & $2$ & $3$ & $4$ \\ \hline
    $0$ & $3$ &  &  &  &  \\
    $1$ & $0$ & $4$ &  &  &  \\
    $2$ & $0$ & $0$ & $0$ &  &  \\
    $3$ & $0$ & $0$ & $4$ & $0$ &  \\
    $4$ & $0$ & $0$ & $0$ & $0$ & $0$ 
    \end{tabular}
    \caption{Betti numbers of the Eulerian magnitude homology of $C_4 \setminus \{0\}$.}
    \label{tab:Betti_EMH_three_path}}
\end{table} 
The differences of the Betti numbers between the full graph $C_4$ and $C_4 \setminus \{0\}$ are shown in \Cref{tab:diff_Betti_MH_cycle} and \Cref{tab:diff_Betti_EMH_cycle}. Summing up the entries in the rows up to a specific $l$ gives the (Eulerian) magnitude homology centrality for that degree. As we have shown in \Cref{prop:EMHC_l=0}, the difference in dimension $(k,l) = (0,0)$ is $1$ and in bidegree $(k,l) = (1,1)$ the difference is precisely twice the degree of the vertex we remove. 
\begin{table}[h!]
\parbox{.475\linewidth}{
    \centering
    \begin{tabular}{l|lllll}
    $_l \diagdown ^k$ & $0$ & $1$ & $2$ & $3$ & $4$ \\ \hline
    $0$ & $1$ &  &  &  &  \\
    $1$ & $0$ & $4$ &  &  &  \\
    $2$ & $0$ & $0$ & $8$ &  &  \\
    $3$ & $0$ & $0$ & $0$ & $12$ &  \\
    $4$ & $0$ & $0$ & $0$ & $0$ & $16$ 
    \end{tabular}
    \caption{Differences in Betti numbers of the magnitude homology of $C_4$ and $C_4 \setminus \{0\}$.}
    \label{tab:diff_Betti_MH_cycle}}
\hfill
\parbox{.475\linewidth}{
    \centering
    \begin{tabular}{l|lllll}
    $_l \diagdown ^k$ & $0$ & $1$ & $2$ & $3$ & $4$ \\ \hline
    $0$ & $1$ &  &  &  &  \\
    $1$ & $0$ & $4$ &  &  &  \\
    $2$ & $0$ & $0$ & $4$ &  &  \\
    $3$ & $0$ & $0$ & $4$ & $0$ &  \\
    $4$ & $0$ & $0$ & $0$ & $8$ & $0$ 
    \end{tabular}
    \caption{Differences in Betti numbers of the Eulerian magnitude homology of $C_4$ and $P_3$.}
    \label{tab:diff_Betti_EMH_cycle}}
\end{table}

Consider now the Eulerian magnitude homology spaces. Let us look at $l = 2$. The only non-trivial spaces can occur for $k=1,2$. The generators of $\EMC[1]{2}{C_4}$ are \begin{align*}
    (0,2), (1,3), (2, 0), (3, 1)
\end{align*} and the generators of $\EMC[2]{2}{C_4}$ are \begin{align*}
    &(0,1,2), (0,3,2), (1,2,3), (1,0,3) \\
    &(2,1,0), (2,3,0), (3,2,1), (3,0,1).
\end{align*}

Since every generator of $\EMC[1]{k}{C_4}$ lies already in the image of the boundary, the corresponding homology space is trivial, $\EMH[1]{2}{C_4} = 0$. The kernel $\ker(\partial\colon \EMC[2]{2}{C_4} \to \EMC[1]{2}{C_4})$ is generated by the chains \[
(0,1,2) + (0,3,2), (1,2,3) + (1,0,3), (2,1,0)+(2,3,0), (3,2,1)+(3,0,1),
\]
so we get $\dim(\EMH[2]{2}{C_4}) = 4$. \textcite{giusti2024eulerianMH_subgraph_structure} show a relation between the dimension of Eulerian magnitude homology and the count of certain substructures in a graph. Specifically,
let $C_3$ and $C_4$ be the three- and four-cycles and $F_4$
be the four cycle with one diagonal edge added, and let $c(G,H)$ denote the number of induced subgraphs of $G$ isomorphic to $H$. Then it was shown that the Eulerian magnitude homlogy with $\IZ$ coefficients satisfies \[
\dim(\EMH[2]{2}{G}) \leq 6\cdot c(G,C_3) + 4\cdot c(G, C_4) + 2\cdot c(G, F_4)
\] for any graph $G$. In our example, only $c(C_4, C_4) = 1$ is adding a non-zero term to the right hand side and we have in fact an equality. Removing one vertex destroys the $C_4$ subgraph and the above result implies that $\dim(\EMH[2]{2}{C_4 \setminus \{0\}}) = 0$. Indeed, when removing the vertex, we are left with the generators for $\EMC[1]{2}{C_4 \setminus \{0\}}$ \[
(1,3), (3,1),
\] and for $\EMC[2]{2}{C_4 \setminus \{0\}}$ we have \[
(1,2,3), (3,2,1)
\] as generators. In this case, the kernel of the differential $\partial\colon \EMC[2]{2}{C_4 \setminus \{0\}} \to \EMC[1]{2}{C_4 \setminus \{0\}}$ is trivial and the image is again the whole group $\EMC[1]{2}{C_4 \setminus \{0\}}$. The difference in Betti numbers in bidegree $(2,2)$ hence captures that removing the vertex $0$ destroys the cyclic substructure.

For the standard magnitude, the cyclic substructure also gets destroyed and that is reflected in the differences of the Betti numbers, but additionally there are terms of chains going back and forth on adjacent vertices, so in particular the two edges incident to the vertex $0$ that are removed will keep inducing chains that appear in the difference.
\end{ex}

\subsection{Computation of the (Eulerian) Magnitude Homology Centrality}
\label{sec:computation_EMH}
We briefly discuss the computation of the (Eulerian) magnitude homology centrality. We have implemented a naive algorithm computing the (Eulerian) magnitude homology and our proposed centrality measures in Python, we discuss the code in \Cref{app:code} and it is available on GitHub.\footnote{The code is available at \href{https://github.com/aidos-lab/mh_cent}{https://github.com/aidos-lab/mh\_cent}.} 
Using this implementation, we compute the examples presented in this paper and perform several experiments in \Cref{sec:axioms,sec:results}. With our implementation, already the complexity for computing all generators of the magnitude chain complex in bidegree $(k,l)$ for a graph with $n$ vertices that start with a given vertex and correspond to some partition of $l$ with $k$ summands is of order $\mathcal{O}(n^k)$.
Indeed, the problem of computing the Eulerian magnitude homology is being studied and there has been shown a relation to the subgraph isomorphism problem \cite{giusti2024eulerianMH_subgraph_structure}, which is known to be NP-complete in the general case \cite{wegener2005complexity_theory}. 
\textcite{menara2024computing_eulerian_mh} study the problem of computing the diagonal Eulerian magnitude homology Betti numbers and propose a breadth first algorithm that still runs exponential in $k$ in the worst case, but has sub-exponential or polynomial complexity in many real-world cases. This is due to the fact that their algorithm grows exponentially in the diameter of the graph and many real world graphs exhibit small diameters. In our application of the Eulerian magnitude homology centrality, we control the diameter of the neighbourhood through the parameter $l$ as explained in the subsequent section, indicating potential to further improve on our algorithm for computing (Eulerian) magnitude homology centrality.

\subsection{Locality of the (Eulerian) Magnitude Homology Centrality}
We show that the above defined centrality measure is in fact a local measure. That is, we do not need the information of the whole graph to compute the magnitude homology centrality of degree $l$ of a specific vertex, but it suffices to know the structure of the $l$-hop neighbourhood. In this section, we formalise this statement and prove it.

\begin{defn}[$l$-hop neighbourhood]
    Let $G = (V,E)$ be a graph and $v \in V$ a vertex. The neighbourhood subgraph $G_v^l$ of the vertex $v$ and of radius $l$ is the subgraph induced by the $l$-hop neighbourhood \[
    \mathcal{N}_l^G(v) = \{u \in V| d(u,v) \leq l\}.
    \] 
\end{defn}

\begin{defn}[Local version of (Eulerian) magnitude homology centrality]
    The local version of (Eulerian) magnitude homology centrality of a vertex $v$ from the graph $G$ in degree $l$ is defined to be the magnitude homology centrality
    \begin{align}
        \label{eq locrelMH centrality}
         \MHcent{G_v^l, v, l} &= \sum_{l' = 0}^{l} \sum_{k' = 0}^{l'} (\beta_{k', l'}(G_v^{l}) - \beta_{k', l'}(G_v^{l} \setminus \{v\})) \\
         \EMHcent{G_v^l, v, l } &= \sum_{l' = 0}^{l} \sum_{k' = 0}^{l'} (\beta_{k', l'}^E(G_v^{l}) - \beta_{k', l'}^E(G_v^{l} \setminus \{v\}))
    \end{align}   
\end{defn}
We now prove preliminary results in order to show that the local magnitude homology centrality is equal to the global magnitude homology centrality in the standard case in \Cref{prop:locality}. For the Eulerian version the proof is the same. 
\begin{prop}
\label{prop centrality inclusion map relation}
    Denote by $f\colon G \setminus \{v\} \to G$ the inclusion map. The magnitude homology centrality can be expressed as \[
    \MHcent{G, v, l} = \sum_{l'=0}^l \sum_{k'=0}^{l'} (\dim(\compl(\im(f_\star))) - \dim(\ker(f_\star))),
    \] where $f_\ast$ is to be understood as the induced map in homology in bidegree corresponding to the sum indices. Furthermore, $\compl(\im(f_\star))$ denotes the complement of the subspace $\im(f_\star) \subset \MH{\star}{G}$.
\end{prop}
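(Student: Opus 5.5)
This reduces to rank--nullity applied bidegree by bidegree. Fix $l' \le l$ and $k' \le l'$, and consider the induced linear map (cf.\ \Cref{rmk induced map linear})
\[
f_\ast \colon \MH[k']{l'}{G \setminus \{v\}} \to \MH[k']{l'}{G}.
\]
It is well defined because $f$ is an inclusion, hence a map of graphs by \Cref{rmk maps of graph decrease distance}. All spaces involved are finite-dimensional $\IF_2$-vector spaces, since the chain groups in a fixed bidegree are spanned by finitely many tuples. We may therefore use dimension counting freely. The plan is to express both Betti numbers appearing in a summand of \eqref{eq:defn_MHC} through $\ker(f_\ast)$, $\im(f_\ast)$ and its complement, and then sum.

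First, by rank--nullity applied to $f_\ast$ on the domain,
\[
\beta_{k',l'}(G\setminus\{v\}) = \dim \ker(f_\ast) + \dim \im(f_\ast).
\]
Second, since $\im(f_\ast)$ is a subspace of the finite-dimensional space $\MH[k']{l'}{G}$, we choose a complementary subspace $\compl(\im(f_\ast))$, so that $\MH[k']{l'}{G} = \im(f_\ast) \oplus \compl(\im(f_\ast))$. This gives
\[
\beta_{k',l'}(G) = \dim \im(f_\ast) + \dim \compl(\im(f_\ast)).
\]
The complement is not unique, but its dimension is, namely $\dim \MH[k']{l'}{G} - \dim\im(f_\ast)$. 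So the expression in the statement is well defined; I would state this explicitly.

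Subtracting the two identities, the $\dim\im(f_\ast)$ terms cancel and
\[
\beta_{k',l'}(G) - \beta_{k',l'}(G\setminus\{v\}) = \dim\compl(\im(f_\ast)) - \dim\ker(f_\ast)
\]
in every bidegree. Summing over $0\le l'\le l$ and $0\le k'\le l'$ gives the claimed formula for $\MHcent{G,v,l}$. The Eulerian version is identical, with $\EMH{\ast}{\cdot}$ in place of $\MH{\ast}{\cdot}$: the inclusion is injective, so it induces a chain map on Eulerian chains.

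There is no real obstacle here. The only points needing care are finite-dimensionality, so that the dimensions are finite and can be subtracted, and the independence of $\dim\compl$ from the choice of complement. Both follow from working over the field $\IF_2$ with finitely generated chain groups.
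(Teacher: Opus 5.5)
Your proposal is correct and follows the paper's proof essentially verbatim. In each bidegree you split $\MH[k']{l'}{G} = \im(f_\ast) \oplus \compl(\im(f_\ast))$, apply rank--nullity to $f_\ast$ on $\MH[k']{l'}{G\setminus\{v\}}$, subtract so that the $\dim\im(f_\ast)$ terms cancel, and then sum; your added remarks on finite-dimensionality and on $\dim\compl(\im(f_\ast))$ not depending on the chosen complement are correct points that the paper leaves implicit.
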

\begin{proof}
    First, note that the inclusion $f$ is a map of graphs, hence the induced map in homology is well defined and by Remark \ref{rmk induced map linear} it is a linear map between the magnitude homology vector spaces.
    We consider fixed $l, k \geq 0$ and show that \[
    \dim(\MH[k]{l}{G}) - \dim(\MH[k]{l}{G \setminus \{v\}} = \dim(\compl(\im(f_\star))) - \dim(\ker(f_\star)),\]
    where we consider $f_\star$ to be the induced map in homology in bidegree $(k,l)$.
    Note that the homology vector space splits into a direct sum $\MH[k]{l}{G} = \im(f_\star) \oplus \compl(\im(f_\star))$ and hence $\dim(\MH[k]{l}{G}) = \dim(\im(f_\star)) + \dim(\compl(\im(f_\star)))$. By the rank-nullity theorem we furthermore get the relation $\dim(\MH[k]{l}{G \setminus \{v\}} = \dim(\im(f_\star)) + \dim(\ker(f_\star))$. Putting these together we obtain \[
        \dim(\MH[k]{l}{G}) - \dim(\MH[k]{l}{G\setminus \{v\} } = \dim(\compl(\im(f_\star))) - \dim(\ker(f_\star)).
    \]
\end{proof}

The next definitions formalises the concept of all vertices that are involved in a magnitude chain. The idea being that if we can understand that a chain only depends on the vertices in some neighbourhood, then changing the graph outside of that neighbourhood will not affect the chain. Care has to be taken since the shortest path between two vertices can be non-unique (in case of cycles in the graph) and we want to include all of these possible paths that are defined by a magnitude chain.

\begin{defn}[Underlying walk]
    An underlying walk of a generator $\bar{x} = (x_0, \dots, x_k) \in \MC[k]{l}{G}$ is a walk that passes all vertices $x_0, \dots, x_k$ in that order and for every $i \in [k]$ takes a shortest path between $x_{i-1}$ and $x_i$. We denote the collection of all underlying walks of the generator $\bar{x}$ by $\mathcal{W}(\bar{x})$. Note that by definition, the length of an underlying walk is equal to $l$.
\end{defn}

\begin{defn}[Support of a magnitude chain]
    Let $\sum_j \bar{x_j} \in \MC[k]{l}{G}$ be any element. The set of vertices $\{x \in W | W \in \cup_j \mathcal{W}(\bar{x_j})\}$ is called the support of $\sum_j \bar{x_j}$ and denoted by $\mathcal{S}(\sum_j \bar{x_j})$.
\end{defn}
In their analysis of the cycles of the diagonal Eulerian magnitude homology, \textcite{giusti2024eulerianMH_subgraph_structure} introduce the following definition. 
\begin{defn}[Minimal cycle]
    Let $\alpha = \sum_{j \in J} \bar{x_j} \in \MC[k]{l}{G}$ be a cycle, i.e., the differential $\partial(\alpha)$ vanishes. We say that $\alpha$ is a \emph{minimal cycle} if there is no subset $J' \subsetneq J$ such that $\partial(\sum_{j \in J'}\bar{x_j}) = 0$.
\end{defn}
A special case of the following lemma, where $|J| = 2$, was already mentioned in \textcite[Example 3.2]{giusti2024eulerianMH_subgraph_structure}.

\begin{lemma}
    \label{lemma endpoints cycle}
    Let $\alpha = \sum_{j \in J} \bar{x_j} \in \MC[k]{l}{G}$ be a minimal cycle. Then for all $j_1, j_2 \in J$, the start and endpoints of $\bar{x_{j_1}}$ and $\bar{x_{j_2}}$ agree.
\end{lemma}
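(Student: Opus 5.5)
The proof rests on one observation: the differential never changes the first or last vertex of a generator. In the definition of $\partial = \sum_{i=1}^{k-1} \partial_i$, only the interior vertices $x_1, \dots, x_{k-1}$ are removed. So every nonzero term $\partial_i(x_0,\dots,x_k)$ is again a tuple starting at $x_0$ and ending at $x_k$. I will use this to split the chain complex by endpoint pairs, and then let minimality force all generators of $\alpha$ into a single summand.

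\textbf{Step 1 (splitting the chain groups).} For each pair $(a,b) \in V \times V$, let $\mathrm{MC}^{a,b}_{k,l}(G)$ be the subspace spanned by the generators $(x_0,\dots,x_k)$ with $x_0 = a$ and $x_k = b$. The generators of $\MC[k]{l}{G}$ are partitioned according to their endpoints. Hence
\[
\MC[k]{l}{G} = \bigoplus_{(a,b)} \mathrm{MC}^{a,b}_{k,l}(G).
\]
By the observation above, $\partial$ maps $\mathrm{MC}^{a,b}_{k,l}(G)$ into $\mathrm{MC}^{a,b}_{k-1,l}(G)$ for $k \geq 1$. So $\partial$ is block diagonal with respect to this splitting. (For $k=1$ there are no interior vertices, so $\partial = 0$ and the compatibility holds trivially.)

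\textbf{Step 2 (decomposing the cycle).} Take a minimal cycle $\alpha = \sum_{j\in J}\bar{x_j}$, where the $\bar{x_j}$ are distinct generators and $J \neq \emptyset$. Group the indices by endpoint pair: $J = \bigsqcup_{(a,b)} J_{a,b}$, and set $\alpha_{a,b} = \sum_{j\in J_{a,b}} \bar{x_j}$. Then
\[
0 = \partial(\alpha) = \sum_{(a,b)} \partial(\alpha_{a,b}),
\]
and the summand $\partial(\alpha_{a,b})$ lies in $\mathrm{MC}^{a,b}_{k-1,l}(G)$. Since these subspaces form a direct sum, each summand must vanish: $\partial(\alpha_{a,b}) = 0$ for every $(a,b)$.

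\textbf{Step 3 (using minimality).} Suppose two generators of $\alpha$ had different endpoint pairs. Then at least two of the sets $J_{a,b}$ would be nonempty. Choose one nonempty $J_{a,b}$; it is then a proper, nonempty subset of $J$ with $\partial\bigl(\sum_{j\in J_{a,b}}\bar{x_j}\bigr)=0$. This contradicts the minimality of $\alpha$, so exactly one $J_{a,b}$ is nonempty. That is, all $\bar{x_j}$ share the same start point and the same end point.

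The only delicate point is interpretive rather than technical. In the definition of minimal cycle, $J'$ must be read as a nonempty proper subset, since the empty sum is trivially a cycle. The same argument applies verbatim to Eulerian magnitude chains, whose differential also removes only interior vertices.
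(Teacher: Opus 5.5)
Your proposal is correct and follows essentially the same argument as the paper: group the summands by their endpoint pair, note that $\partial$ deletes only interior vertices, so terms with different endpoints cannot cancel, and let minimality force a single group. Your write-up is somewhat more careful than the paper's. You make the direct-sum splitting explicit, and you rightly point out that the subset $J'$ in the definition of a minimal cycle must be read as nonempty.
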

\begin{proof}
    Let $\alpha  = \sum_{j \in J} \bar{x_j} \in \MC[k]{l}{G}$ be a minimal cycle and let $\cup_{n \in N} J_n = J$ be a partition of $J$ such that all chains corresponding to some $J_n$ have the same start and end points. By the definition of the differential, $\partial(\alpha) = \sum_{j \in J} \partial(\bar{x_j}) = \sum_{j \in J} \sum_{i=1}^{k-1} \partial_i(\bar{x_j})$. The maps $\partial_i$ potentially remove a vertex from the tuple at position $i \in \{1,\dots, k-1\}$, in particular the start and end points remain. For the differential of $\alpha$ to vanish, this implies that already $\sum_{j \in J_n} \partial(\bar{x_j})$ for any $n \in N$ must vanish because summands with different start and endpoints cannot cancel each other out in $\MC[k]{l}{G}$. Since $\alpha$ is minimal, it follows that $\abs{N} = 1$ and hence all summands of $\alpha$ must already have the same start and endpoints.
\end{proof}

\begin{rmk}
    \label{rmk support of boundary}
    Consider a magnitude chain $\bar{x} = (x_0, \dots x_{k+1}) \in \MC[k+1]{l}{G}$ and any element $ \alpha	\neq 0 \in \MC[k]{l}{G}$ such that $\partial(\bar{x}) = \alpha$. Because $\alpha \neq 0$, there is at least one index $i \in [k]$ such that $\partial_i(\bar{x}) = (x_0, \dots, \hat{x_i}, \dots, x_{k+1})$. In particular, the length $\ell(x_0, \dots, \hat{x_i}, \dots, x_{k+1}) = l$ and hence every underlying walk of $\bar{x}$ is still an underlying walk of $(x_0, \dots, \hat{x_i}, \dots, x_{k+1})$. The other direction does not necessarily hold, there might be underlying walks of $(x_0, \dots, \hat{x_i}, \dots, x_{k+1})$ that are not underlying walks of $\bar{x}$. In other words, if the removal of a vertex does not change the length, then that vertex was in a way not essential for determining the underlying walks of the tuple. 
\end{rmk}

\begin{lemma}
\label{lemma:locality_support}
Let $v \in G$ be a vertex and $\alpha = \sum_j \bar{x_j} \in \MC[k]{l}{G}$ a minimal cycle such that $v$ is in the support of $\sum_j \bar{x_j}$. Then $\mathcal{S}(\sum_j\bar{x_j}) \subset \mathcal{N}_l(v)$.
\end{lemma}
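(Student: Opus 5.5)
By \Cref{lemma endpoints cycle}, all summands $\bar{x_j}$ of the minimal cycle $\alpha$ share a common start point $a$ and a common end point $b$. Every underlying walk of every summand therefore runs from $a$ to $b$ and has length exactly $l$. The plan is to reduce the statement to a triangle-inequality estimate along these walks.

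First, since $v \in \mathcal{S}(\alpha)$, there are an index $j_0$ and a walk $W_0 \in \mathcal{W}(\bar{x_{j_0}})$ with $v \in W_0$. The walk $W_0$ has length $l$ and contains both $a$ and $b$. Splitting $W_0$ at an occurrence of $v$ gives two pieces, from $a$ to $v$ and from $v$ to $b$, whose lengths add up to $l$. Hence
\[
d(a,v) + d(v,b) \leq l.
\]

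Second, let $u$ be any vertex of $\mathcal{S}(\alpha)$. Then $u$ lies on some walk $W \in \mathcal{W}(\bar{x_j})$, which also runs from $a$ to $b$ with length $l$. Splitting $W$ at $u$ gives $d(a,u) + d(u,b) \leq l$, so at least one of $d(a,u)$ and $d(u,b)$ is at most $l/2$. Suppose $d(a,u) \leq l/2$. If in addition $d(a,v) \leq l/2$, the triangle inequality through $a$ gives $d(u,v) \leq l$.

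The main obstacle is the mixed case, where $u$ is close to $a$ but $v$ is close to $b$. Here I would avoid splitting into halves and estimate directly:
\[
d(u,v) \leq d(u,a) + d(a,v), \qquad d(u,v) \leq d(u,b) + d(b,v).
\]
Adding these and using the two bounds above gives
\[
2\,d(u,v) \leq \bigl(d(u,a)+d(u,b)\bigr) + \bigl(d(v,a)+d(v,b)\bigr) \leq 2l,
\]
so $d(u,v) \leq l$ in every case. This makes the case split unnecessary.

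Finally, I need that $d$ is the metric of $G$ and that each walk has length exactly $l$ as measured by $d$. Both hold because each segment of an underlying walk is a shortest path in $G$, and sub-walks only give upper bounds on distances. Hence $u \in \mathcal{N}_l(v)$ for every $u \in \mathcal{S}(\alpha)$, which proves $\mathcal{S}(\alpha) \subset \mathcal{N}_l(v)$. Minimality of $\alpha$ is used only to obtain the common endpoints from \Cref{lemma endpoints cycle}. Without it, summands with different endpoints could lie far apart and the bound would fail.
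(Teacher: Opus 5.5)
Your proof is correct and is essentially the paper's argument. The paper also uses Lemma~\ref{lemma endpoints cycle} to get common endpoints, concatenates a walk through $v$ with a walk through $u$ into a closed walk of length $2l$, and takes the shorter of the two arcs between $v$ and $u$ (via $a$ or via $b$); that is exactly your summed triangle inequality $2\,d(u,v)\le 2l$, stated more cleanly and without the paper's slip of writing $x_1$ for the endpoint $x_k$.
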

\begin{proof}
    By Lemma \ref{lemma endpoints cycle}, every summand in the cycle has the same start and end point, so we simply write $x_0$ and $x_k$ for them respectively.
    Let $w \in \mathcal{S}(\sum_j\bar{x_j})$ be any vertex in the support. 
    We need to show that $d(v, w) \leq l$. 
    Since both $v$ and $w$ are in the support of $\alpha$, there are walks $W_1$ and $W_2$ from $x_0$ to $x_k$, both of length $l$. By concatenating the two walks, we obtain a closed walk of length $2l$. 
    Since the vertices $v$ and $w$ lie on this closed walk, we obtain two paths from $v$ to $w$ by taking the path through $x_0$ and $x_1$, respectively. The sum of the lengths of these walks from $v$ to $w$ is $2l$, hence at least one of them has length less than or equal to $l$. It follows that the shortest-path distance $d(v,w) \leq l$.
\end{proof}

\begin{rmk}
\label{rmk:locality_generator}
    From the proof of the above lemma, we obtain the following special case: Let $v \in G$ be any vertex and $\bar{x} = (x_0, \dots, x_k) \in \MC[k]{l}{G}$ be any generator such that $x_i = v$ for some $i \in [k]$. Then, for any underlying walk $W \in \mathcal{W}(\bar{x})$, any vertex $w \in W$ is contained in the neighbourhood $\mathcal{N}_l(v)$.
\end{rmk}

\begin{prop}
\label{prop:locality}
    The (Eulerian) magnitude homology centrality is a local centrality, that is, $\MHcent{G, l, v} =  \MHcent{G_v^l, l, v}$ and $\EMHcent{G, l, v} = \EMHcent{G_v^l, v, l}$ for any vertex $v \in G$ and for any $l \geq 0$.
\end{prop}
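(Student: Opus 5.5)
The plan is to split every chain complex by the endpoints of its generators and compare the pieces pair by pair. Fix $l' \le l$. The differential $\partial = \sum_{i=1}^{k-1}\partial_i$ never deletes $x_0$ or $x_k$. This is the observation behind \Cref{lemma endpoints cycle}. So for any graph $H$ the complex splits as
\[
\MC{l'}{H} = \bigoplus_{(a,b)} \MC{l'}{H}_{a,b},
\]
where $\MC{l'}{H}_{a,b}$ is spanned by the tuples from $a$ to $b$. Hence $\beta_{k,l'}(H) = \sum_{(a,b)} \dim \h_k(\MC{l'}{H}_{a,b})$. The same splitting holds for $\EMC{l'}{H}$, so the Eulerian case runs verbatim. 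It therefore suffices to show that, for every pair $(a,b)$ and every $k$,
\[
\dim \h_k(\MC{l'}{G}_{a,b}) - \dim \h_k(\MC{l'}{G\setminus\{v\}}_{a,b})
= \dim \h_k(\MC{l'}{G_v^l}_{a,b}) - \dim \h_k(\MC{l'}{G_v^l\setminus\{v\}}_{a,b}),
\]
where any piece whose endpoints are not all present in the relevant graph is read as $0$. Summing over $(a,b)$, $k'$ and $l'$ then gives \Cref{prop:locality}.

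\textbf{Step 1 (unaffected pairs).} Call $(a,b)$ unaffected in $H$ if $H$ contains no walk from $a$ to $b$ of length $\le l'$ through $v$. I will show that for such a pair the complexes for $H$ and $H\setminus\{v\}$ coincide: same generators and same differential.
\begin{itemize}
\item Take a tuple of length $l'$ in $H$. All its underlying walks avoid $v$, so each shortest segment avoids $v$. Hence the distances between consecutive landmarks agree in $H$ and $H\setminus\{v\}$.
\item Conversely, take a tuple of $H\setminus\{v\}$-length $l'$. If its $H$-length were smaller, there would be a walk of length $<l'$ through $v$, which is excluded.
\item The same argument applied to the subtuples $(x_0,\dots,\widehat{x_i},\dots,x_k)$ shows the components $\partial_i$ agree.
\end{itemize}
Every walk in $G_v^l$ is a walk in $G$. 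So a pair unaffected in $G$ is also unaffected in $G_v^l$, and both sides of the displayed identity vanish. This also covers pairs not contained in $\mathcal N_l(v)$.

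\textbf{Step 2 (affected pairs).} Now suppose $G$ has a walk $P$ from $a$ to $b$ of length $\le l'$ through $v$. I claim that every vertex $w$ on any walk $Q$ from $a$ to $b$ of length $\le l'$, in $G$ or in $G\setminus\{v\}$, satisfies $d_G(v,w)\le l$. The argument is that of \Cref{lemma:locality_support}. Concatenating $P$ with the reverse of $Q$ gives a closed walk of length $\le 2l$ through $v$ and $w$. One of its two arcs between $v$ and $w$ has length $\le l$.

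Consequently every shortest segment between consecutive landmarks lies in $\mathcal N_l(v)$, for the tuples of $G$, and also for those of $G\setminus\{v\}$, where it lies in $\mathcal N_l(v)\setminus\{v\}$. This yields the local identities $d_{G_v^l}=d_G$ and $d_{G_v^l\setminus\{v\}}=d_{G\setminus\{v\}}$ on the relevant landmark pairs.

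The comparison in the other direction works the same way. Distances in an induced subgraph can only increase. So a tuple of local length $l'$ has global length $\le l'$. A shortest realising walk then again lies in the neighbourhood by the claim, so the two lengths agree. Hence $\MC{l'}{G}_{a,b}=\MC{l'}{G_v^l}_{a,b}$ and $\MC{l'}{G\setminus\{v\}}_{a,b}=\MC{l'}{G_v^l\setminus\{v\}}_{a,b}$ as complexes, and the identity holds term by term.

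\textbf{Main obstacle.} The delicate step is Step 2 for the vertex-deleted graphs. Walks in $G\setminus\{v\}$ need not pass near $v$ a priori, since \Cref{lemma:locality_support} only controls supports containing $v$. The way around this is to exploit that the pair is affected: a walk through $v$ with the same endpoints always exists, and it serves as the partner in the closed-walk argument. I must also check carefully that subtuples appearing in $\partial_i$ have length $\le l'$ and the same endpoints, so that the claim applies to them too. If the endpoint splitting causes trouble, the fallback is to run the same comparison through the maps $f_\ast$ of \Cref{prop centrality inclusion map relation}.
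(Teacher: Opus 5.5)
Your argument is correct, and it takes a genuinely different route from the paper's.

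The paper works with homology classes. Using \Cref{prop centrality inclusion map relation}, it rewrites each Betti-number difference as $\dim\compl(\im f_\ast)-\dim\ker f_\ast$ for the inclusion $f\colon G\setminus\{v\}\to G$. It then sorts basis classes of $\compl(\im f_\ast)$ and $\ker f_\ast$ into types: classes involving $v$, classes with a summand whose length changes, and classes that become boundaries in $G$. It represents these by minimal cycles. Finally it invokes \Cref{lemma endpoints cycle}, \Cref{rmk support of boundary} and \Cref{lemma:locality_support} to place their supports in $\mathcal N_l(v)$, identifying these spaces with their counterparts for $f^l$.

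You stay at the chain level instead. After splitting by endpoint pairs, a pair is affected exactly when $d_G(a,v)+d_G(v,b)\le l'$. Unaffected summands are literally the same complex for $H$ and $H\setminus\{v\}$. Affected summands are literally the same complex for $G$ and $G_v^l$ (resp.\ $G\setminus\{v\}$ and $G_v^l\setminus\{v\}$), differentials included.

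This buys rigour and economy. You never choose complements or minimal-cycle bases. You also never have to argue separately that a cycle which is (not) a boundary in one graph remains so in another, because the $(k\pm1)$-chains of each summand are identified together with the $k$-chains. Your affected-pair device treats uniformly the walks in the vertex-deleted graphs that avoid $v$; the paper handles these case by case by locating a summand with an underlying walk through $v$ in $G$. As a by-product, only affected pairs contribute, and the $(k,l')$-difference already depends only on $G_v^{l'}$.

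What the paper's $f_\ast$ formulation offers in exchange is the interpretation it uses right after the proof. There the centrality balances classes destroyed by deleting $v$ against classes created because geodesics through $v$ lengthen.

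The subtuple check you flagged goes through as you expect. Subtuples keep the endpoints and have length at most $l'$ in every graph involved, so your closed-walk claim applies to them and the lengths agree across graphs. Your fallback via $f_\ast$ is therefore unnecessary.
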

\begin{proof}
    Let $f\colon G \setminus \{v\} \to G$ and $f^\ell \colon G_v^\ell \setminus \{v\} \to G_v^\ell$ denote the inclusions. By Proposition \ref{prop centrality inclusion map relation}, it is enough to show that for any $k, l \geq 0$, the dimensions satisfy \begin{align}
        \dim(\compl(\im(f_\star))) &= \dim(\compl(\im(f_\star^\ell))) \label{eq local proof dim compl im}\\ \dim(\ker(f_\star)) &= \dim(\ker(f_\star^\ell)) \label{eq local proof dim ker}.
    \end{align}
    First, we show Equation \eqref{eq local proof dim compl im}. There are two types of basis elements of $\compl(\im(f_\star))$, which are: \begin{itemize}
        \item Homology classes of generators that contain the vertex $v$ in one of their summands.
        \item Homology classes that do not contain $v$ in any of their summands but for which at least one of the respective $k$-tuples does have a different length in $G\setminus \{v\}$. The only possibility for this to happen is if removing the vertex $v$ increases the length of that summand, see \Cref{rmk maps of graph decrease distance}.
    \end{itemize}
    We first note that if the basis consists of non-minimal cycles, by splitting them up into the minimal cycles, we obtain a generating set and can extract a basis of minimal cycles from this. Hence, consider an element of the first type. The homology class can be represented as a minimal cycle $\alpha = \sum_j (x_0^j, \dots, x_k^j)$ of $(k,l)$-chains, where there exist some $j'$ and $i'$ such that $x_{i'}^{j'} = v$. By \Cref{lemma:locality_support} and particularly \Cref{rmk:locality_generator}, the support of $\alpha$ lies in the $l$-hop neighbourhood $\mathcal{N}^G_l(v)$ of $G$.
    
    Let $\sum_j \bar{x_j}$ represent a homology class of $\MH[k]{l}{G}$ of the second type. That is, $v$ does not appear as a vertex in any generator of $\sum_j \bar{x}_j$ and there exists an index $j'$ such that $\ell_{G \setminus \{v\}}(\bar{x}_{j'}) > l$. For this to happen, the vertex $v$ must lie on an underlying walk of $\bar{x}_{j'}$ in $G$ because this is the only vertex that differs between $G$ and $G \setminus \{v\}$. Then again by Lemma \ref{lemma:locality_support}, it follows that the support of $\sum_j (x_0^j, \dots, x_k^j)$ lies in the neighbourhood $\mathcal{N}^G_l(v)$.
    
    We have shown that every element of $\compl(\im(f_\star))$ already has support in $G \setminus \{v\}$, which proves that $\compl(\im(f_\star)) \subset \compl(\im(f_\star^l))$ (note that this is a slight abuse of notation; by writing this, we mean that any generator $\alpha = \sum_j (x_0^j, \dots, x_k^j)$ representing a homology class in $\compl(\im(f_\star))$ also represents a homology class in $\compl(\im(f_\star^l))$). On the other hand, every element $\sum_j \bar{x_j} \in \compl(\im(f_\star^l))$ lies also in $\compl(\im(f_\star))$ if there is no vertex of $G \setminus \mathcal{N}_l(v)$ in the support of $\sum_j \bar{x_j}$. But by the same reasoning as above, the vertex $v$ is in the support of $\sum_j \bar{x_j}$ and hence the support $\mathcal{S}(\sum_j \bar{x_j}) \subset \mathcal{N}_l(v)$, showing that also $ \compl(\im(f_\star^l)) \subset \compl(\im(f_\star))$
    We deduce that $\dim(\compl(\im(f_\star))) = \dim(\compl(\im(f_\star^l)))$.
    
    Second, we prove that Equation \eqref{eq local proof dim ker} holds. The type of basis elements of the kernel $\dim(\ker(f_\star))$ are: \begin{itemize}
        \item Homology classes of chains for which at least one of their summands has a different length if we add vertex $v$. Note that the only possibility for this to happen is if the length decreases if we add the vertex $v$ to the graph, see \Cref{rmk maps of graph decrease distance}.
        \item Homology classes for which none of the lengths of their summands change if we add vertex $v$ and that have non-zero homology in $G \setminus \{v\}$ but zero homology in $G$, that is, their image under $f_\#$ lies in $\im(\partial\colon \MC[k+1]{l}{G} \to \MC[k]{l}{G})$.
    \end{itemize}
By the same argument as above, consider a homology class of the first type represented by the minimal cycle $\sum_j \bar{x_j} \in \MC[k]{l}{G \setminus \{v\}}$. For any summand $\bar{x_{j'}}$ for which the length decreases if viewed in $G$, that is, $\ell_{G \setminus \{v\}}(\bar{x_{j'}}) = l$ and $\ell_G(\bar{x_{j'}}) < l$ the vertex $v$ must lie on an underlying path in $G$ because that is the only way to decrease the length. By using Lemma \ref{lemma:locality_support} we obtain that the support of the representative  $\mathcal{S}(\sum_j \bar{x_j}) \subset \mathcal{N}^G_l(v)$.
    
    For the second type consider a representative $\alpha = \sum_j \bar{x_j} \in \MC[k]{l}{G}$. From the property that $\alpha \notin \im(\partial\colon \MC[k+1]{l}{G\setminus \{v\}} \to \MC[k]{l}{G \setminus \{v\}}$ but $f_\#(\alpha) \in \im(\partial\colon \MC[k+1]{l}{G} \to \MC[k+1]{l}{G})$ it follows that there exists some homology class $\beta \in \MC[k+1]{l}{G}$ such that $\partial(\beta) = f_\#(\alpha)$. Furthermore, there is a summand of $\beta$ for which $v$ lies on an underlying walk and which does not vanish under the differential, otherwise $\alpha$ would also be a boundary in $\MC[k]{l}{G \setminus \{v\}}$. From Remark \ref{rmk support of boundary} we can conclude that the vertex $v$ is also on an underlying walk of $\alpha$ in $G$ and with Lemma \ref{lemma:locality_support} the support of $\alpha$ is in the $l$-hop neighbourhood $\mathcal{N}^G_l(v)$.
    
    As for the first equation, we have shown that an element in $\ker(f_\star)$ has support in $\mathcal{N}_l(v)$ and therefore $\ker(f_\star) \subset \ker(f_\star^l)$. Conversely, any element in $\ker(f_\star^l)$ can also be viewed as an element of $\ker(f_\star)$ if its support is in $\mathcal{N}^G_l(v)$ even when viewed as a tuple in $G$. But this holds because $v$ must be in the support by the above argument, hence also $\ker(f_\star^l) \subset \ker(f_\star)$. We can conclude that Equation \eqref{eq local proof dim ker} holds as well.
\end{proof}

The previous proof also give us some idea about what the magnitude centrality captures. We described that the elements in $\compl(\im(f_\star))$ are of two possible types. Elements of the first type on the diagonal occur for example if all vertices in the tuple are in a clique. In that case, removing any vertex always results in a strictly shorter length because the neighbours are connected and since we are on the diagonal those generators are not boundaries, hence they define non-zero elements in the (Eulerian) magnitude homology. The latter type is linked to counting on how many shortest paths the vertex $v$ lies. Since the same type of chains also appears as one of the types appearing in $\ker(f_\star)$, some of them cancel each other out, but there is a shift in the degree ensuring that at least in the highest degree of $l$ those second type of chains are counted in the (Eulerian) magnitude homology centrality. Hence our centrality measure does give some weight to vertices that are parts of dense clusters but also vertices that are on shortest paths of other pairs of vertices. We consider a more in-depth analysis of what is counted in the (Eulerian) magnitude homology centrality to be the topic of future work, though, focusing here on its definition, computation, and overall properties instead.

\section{Axioms of Centrality Measures}
\label{sec:axioms}

\begin{wrapfigure}[15]{r}{0.25\textwidth}
    \centering
    \includegraphics[width=\linewidth]{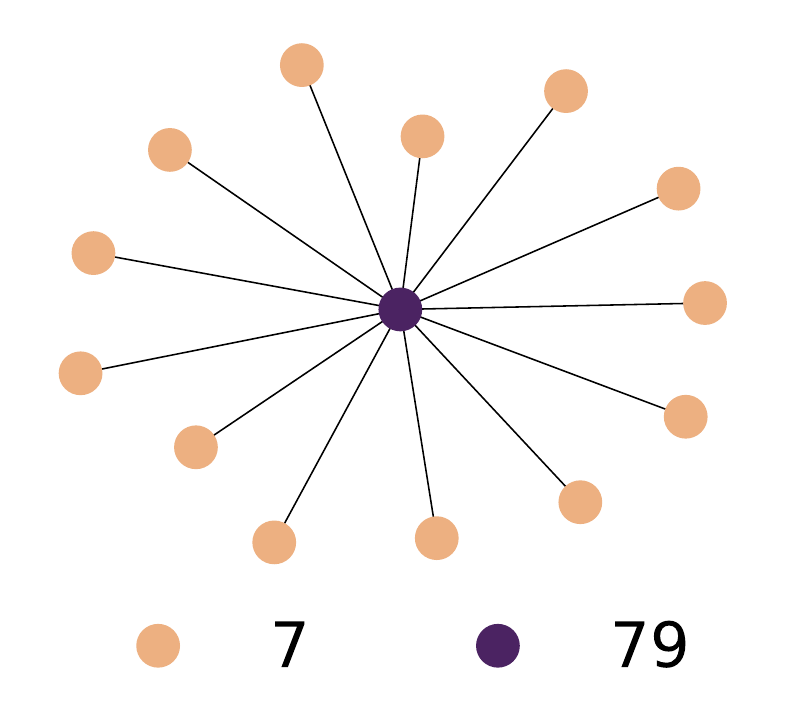}
    \caption{The magnitude homology centrality for $l=3$ of the star graph.}
    \label{fig:star_example}
\end{wrapfigure}
In this section, we motivate that the (Eulerian) magnitude centrality from the previous section indeed can be viewed as a measure of centrality in a graph by validating that it behaves how we would expect a centrality measure to behave.
This is not a straightforward task, as the notion of centrality in a graph is not strictly defined. Intuitively, the idea of centrality is to describe the importance of nodes in a network given a specific application. 
There is a general agreement that on the example of the star graph, the vertex with the highest centrality measure is the centre of the star \cite{sabidussi1966centrality_index,freeman1978centrality_conceptual_clarification}. \Cref{fig:star_example} shows that the magnitude homology centrality respects that intuition.
Apart from this concrete example, there have been several approaches to define a set of characteristics that a centrality measure should fulfil based on the intuition. Notably, \textcite{sabidussi1966centrality_index} was the first to try and define a set of axioms for centrality measures by considering operations of adding and switching edges in a graph. For a recent survey over the different axioms that have been proposed, we refer to \textcite{meshcheryakova2024comparative_analysis_centrality}. Furthermore, \textcite{boldi2014axioms_centrality} propose a set of three axioms for centrality measures on directed graphs with the approach of using strongly connected vertex transitive graphs, namely complete graphs and cycles, to state two of their three proposed axioms. Due to their simplicity, we discuss our proposed centrality measure on the set of axioms introduced by \textcite{boldi2014axioms_centrality}, translated to undirected graphs. An overview of the results from this section can be found in \Cref{tab:axiom_overview}, where we also include several established centrality measures as a reference. We introduce these baseline centrality measures in \Cref{sec:baseline_centrality_measures} when directly comparing them to our newly proposed (Eulerian) magnitude centralities. Note that for the (Eulerain) magnitude homology centrality, we restrict ourselves to $l \in \{2,3\}$, as we have only used these degrees in our experimental section. 

Formally, a centrality measure on the vertices of a graph $G= (V,E)$ is a function $c\colon V \to \IR_{\geq 0}$. Generally, it is also required that $c$ is invariant under isomorphism \cite{sabidussi1966centrality_index,boldi2014axioms_centrality}. That is, for a graph isomorphism $f\colon G \to H$ and any vertex $x \in G$ there is equality $c(x)=x(f(x))$, meaning that the centrality only depends on the structure of the graph. This property also implies that vertices in the same orbit of the automorphism group of the graph have the same centrality value. In particular for vertex transitive graphs, that is graphs where there is only one vertex orbit under the automorphism group, any centrality measure assigns the same value to all vertices in the graph. The (Eulerian) magnitude homology centrality satisfies the isomorphism invariance, since the (Eulerian) magnitude homology itself is an isomorphism invariant.
\begin{table}[]
    
    \begin{center}
        \begin{tabular}{lcccc}
        Centrality Measure & Size & Density & Endpoint Increase & Top Node \\
        \midrule
        $C_\mathrm{MH}$ with $l\in \{2,3\}$ & \textcolor{Green}{weak} & \textcolor{Green}{\tick} & \textcolor{red}{\cross} & \textcolor{Green}{\textbf{?}} \\
        $C_\mathrm{EMH}$ with $l\in \{2,3\}$ & \textcolor{Green}{weak} & \textcolor{Green}{\textbf{?}} & \textcolor{red}{\cross} & \textcolor{Green}{\textbf{?}} \\
        \midrule
        Degree & \textcolor{Green}{weak} & \textcolor{Green}{\tick} & \textcolor{Green}{\tick} & \textcolor{Green}{\tick} \\
        Betweenness & \textcolor{Green}{only \Cref{item:sizeaxiom_cycle}} & \textcolor{red}{\cross} & \textcolor{red}{\cross} & \textcolor{red}{\cross} \\
        Closeness & \textcolor{red}{\cross} & \textcolor{Green}{\tick} & \textcolor{Green}{\tick} & \textcolor{red}{\cross} \\
        PageRank & \textcolor{red}{\cross} & \textcolor{Green}{\tick} & \textcolor{Orange}{\textbf{?}} & \textcolor{Orange}{\textbf{?}}\\
        Subgraph & \textcolor{Orange}{\textbf{?}} & \textcolor{Orange}{\textbf{?}}& \textcolor{Orange}{\textbf{?}} & \textcolor{Orange}{\textbf{?}} \\
    
        \end{tabular}
    
    \end{center}

    \caption{Overview over what axioms are satisfied by different centrality measures. For degree, betweenness, closeness, PageRank, and subgraph centrality the results are taken from \textcite{meshcheryakova2024comparative_analysis_centrality,boldi2014axioms_centrality}. \textcolor{Green}{\tick}: axiom is satisfied; \textcolor{Green}{\textbf{?}}: axiom is conjectured to be true; \textcolor{Orange}{\textbf{?}}: unknown whether axiom is satisfied or not; \textcolor{red}{\cross}: axiom is not satisfied.}
    \label{tab:axiom_overview}
\end{table}

\subsection{The Size Axiom} The size axiom gauges how sensitive a centrality measure is to the size of the graph. The axiom is stated in terms of the disjoint union $K_n \sqcup C_p$ of the complete graph $K_n$ on $n$ vertices and the cycle graph $C_p$ on $p$ vertices. Each of the components is vertex transitive, so they have constant centrality measure.
\begin{defn}[Size axiom]
\label{def:size_axiom}
    A centrality measure satisfies the size axiom, if
    \begin{enumerate}[i)]
        \item \label{item:sizeaxiom_complete} for every $n \geq 0$, there exists $P_{n} \in \IZ_{\geq 0}$ such that for all $p \geq P_n$, the disjoint union $C_p \sqcup K_n$ satisfies that the centrality of a node in the cycle is strictly larger than the centrality of a node in the complete graph;
        \item \label{item:sizeaxiom_cycle} and conversely, if for any $p\geq 0$ there exists $N_p \in \IZ_{\geq 0}$ such that for all $n \geq N_p$ the disjoint union $C_p \sqcup K_n$ satisfies that the centrality of a node of the complete graph is strictly larger than the centrality of a node in the cycle.
    \end{enumerate}
    If a centrality measure only satisfies \ref{item:sizeaxiom_complete}, we say it satisfies the weak size axiom.
\end{defn}

The magnitude homology for for trees and for the complete graph with $\IZ$ coefficients are known and torsion free \cite{hepworth2017categorifying}. As a corollary of the universal coefficient theorem in homology \cite[Corollary 3A.6.]{hatcher2002algebraictopology}, the Betti numbers for integer coefficients are equal to the Betti numbers of the magnitude homology with $\IF_2$ coefficients:

\begin{align*}
    \beta_{k,l}(K_n) &= \begin{cases}
            n \cdot (n-1)^l \quad &\text{if } k=l \\
            0 &\text{else}.
        \end{cases} \\
    \beta_{k,l}(T_n) &= \begin{cases}
            n \quad &\text{if } k=l=0 \\
            2(n-1) \quad &\text{if } k=l>0 \\
            0 &\text{else},
        \end{cases} \\
\end{align*}
where $T_n$ denotes a tree on $n$ vertices.
By removing any vertex in the complete graph $K_n$, we obtain the complete graph $K_{n-1}$ and hence for any vertex $v \in K_n$, the magnitude homology centrality is \begin{align*}
    \MHcent{K_n, v, 2} &= 1 + 2(n-1) + (n-1)(3n+4) = 3n^2-5n+3 \\
    \MHcent{K_n, v, 3} &= \MHcent{K_n, v, 2} + (n-1)(n(n-1)-(n-2)^2) = 4n^3-12n^2+14n-5.
\end{align*}
Since we have shown that the magnitude homology centrality is local, we know that for any vertex $v \in C_p$, its centrality $C_{\mathrm{MH}}(C_p,v,l)$ eventually becomes constant for increasing $p$. In particular, the $l$-hop neighbourhood of $v$ for $p>2l+1$ is the path graph $P_{2l+1}$ and hence the centrality of $C_p$ is \begin{align*}
    \MHcent{C_p, v, 2} &= 1 + 2*2*2 = 9 \textrm{ for } p>5\\
    \MHcent{C_p, v, 3} &= 1 + 3*2*2 = 13 \textrm{ for } p>7.
\end{align*} 
We can conclude that for magnitude homology centrality for $l\in \{2,3\}$, the weak size axiom holds. The full size axiom cannot hold because magnitude homology is a \emph{local} centrality measure.

The Eulerian magnitude homology groups for complete graphs and trees are not known to the best of our knowledge. Hence, in the Eulerian case we fall back on our algorithm and compute the Eulerian magnitude homology for the complete graph and the cycle graph. 
Due to the locality we see that as in the standard case, the Eulerian magnitude centrality for the cycle satisfies \begin{align*}
    \EMHcent{C_p, v, 2} &= 5 \textrm{ for } p>5\\
    \EMHcent{C_p, v, 3} &= 17 \textrm{ for } p>7,
\end{align*} 
while the Eulerian magnitude homology of the complete graph $K_n$ seems to grow with increasing $n$ and in particular already \begin{align*}
    \EMHcent{K_4, v, 2} &= 25 \textrm{ for } p>5\\
    \EMHcent{K_4, v, 3} &= 49 \textrm{ for } p>7
\end{align*} 
is enough to show that Eulerian magnitude homology satisfies the weak size axiom.

\subsection{The Density Axiom} The density axiom aims to ensure that the centrality of a node with a dense neighbourhood is higher than that of a node in a sparse neighbourhood. The axiom is formulated in terms of the graph $D_{n,p}$ consisting of the complete graph $K_n$ and the cycle graph $C_p$ connected by one edge, denoted by $e=(v_C,v_K)$ where $v_K$ belongs to $K_n$ and $v_C$ belongs to $C_p$.
\begin{defn}[Density axiom]
    The density axiom is satisfied if for $n=p>3$, the centrality of node $v_K$ is strictly larger than the centrality of node $v_C$.
\end{defn}

For the (Eulerian) magnitude homology centrality, we need the Betti numbers of $D_{n,p}\setminus \{v_C\} = P_{p-1}\sqcup K_n$ and $D_{n,p}\setminus \{v_K\} = C_{p}\sqcup K_{n-1}$, so the condition $\MHcent{D_{n,p}, v_C, l} < \MHcent{D_{n,p}, v_K, l}$ is equivalent to 
\begin{equation}
    \label{eq:density_axiom}
\sum_{l' = 0}^{l} \sum_{k' = 0}^{l'} (\beta_{k', l'}(C_p) - \beta_{k', l'}(P_{p-1}) < \sum_{l' = 0}^{l} \sum_{k' = 0}^{l'} (\beta_{k', l'}(K_n) - \beta_{k', l'}(K_{n-1})
\end{equation}

by using the additivity of magnitude homology with respect to the disjoint union \cite[Proposition 17]{hepworth2017categorifying}.
If we fix $k,l\in \{0,\dots,3\}$, \textcite{gu2018magnitude_homology_morse} shows that for cycles $C_p$ with $p\geq 7$ the magnitude homology is torsion-free and \[
        \beta_{k,l}(C_p) = \begin{cases}
            p \quad &\text{if } k=l=0 \\
            2p &\text{if } 1 \leq k=l \leq 3 \\
            0  &\text{else for } 0\leq k,l \leq 3,
        \end{cases} 
\] so the same Betti numbers apply for $\IF_2$ coefficients. We can compute the summands of the left hand side of \eqref{eq:density_axiom} for $k,l \in \{0,\dots,3\}$ as \[ \beta_{k, l}(C_p) - \beta_{k, l}(P_{p-1}) = \begin{cases}
            p - (p-1) \quad &\text{if } k=l=0 \\
            2p - 2(p-2) &\text{if } 1 \leq k=l \leq 3 \\
            0  &\text{else for } 0\leq k,l \leq 3 \end{cases} = \begin{cases}
            1 \quad &\text{if } k=l=0 \\
            4 &\text{if } 1 \leq k=l \leq 3 \\
            0  &\text{else for } 0\leq k,l \leq 3. 
            \end{cases}
\]
For the right hand side of \eqref{eq:density_axiom}, we have \[
    \beta_{k, l}(K_n) - \beta_{k, l}(K_{n-1}) = \begin{cases}
            n - (n-1) \quad &\text{if } k=l=0 \\
            (n-1)(n(n-1)^{l-1}-(n-2)^l) &\text{if } 1 \leq k=l \leq 3 \\
            0  &\text{else for } 0\leq k,l \leq 3. \end{cases}
\] Particularly, for $0< k=l\leq 3$ and $n \geq 7$, the difference $\beta_{k, l}(K_n) - \beta_{k, l}(K_{n-1}) \geq 6$, so \eqref{eq:density_axiom} holds for $l\in \{2,3\}$ and $n\geq 7$. For $n<7$ we verify the axiom computationally and find that it holds. We conclude that for the magnitude homology centrality with $l\in \{2,3\}$ the density axiom holds.
As we do not have the analytic expressions in the Eulerian case, we verify the density axiom for $p=k \in \{4,\dots,30\}$ for the Eulerian magnitude homology centrality and find that for all the tested cases the axiom is satisfied, we thus conjecture that the density axiom is true for the Eulerian magnitude homology centrality with $l \in \{2,3\}$.

\subsection{The Endpoint Increase Axiom}
The last axiom stated in \textcite{boldi2014axioms_centrality} is the score-monotonicity axiom, which formalizes the idea, that adding an edge should increase the centrality of the incident vertices. We state the version of this axiom for undirected graphs \cite{meshcheryakova2024comparative_analysis_centrality}.
\begin{defn}[Endpoint increase axiom]
    The endpoint increase axiom is satisfied if for any graph $G$ with non-adjacent vertices $x,y \in V(G)$, adding the edge $(x,y)$ does not decrease the centrality of $x$ and $y$. 
\end{defn}
The intuition behind this axiom is that adding new connections to a node in a graph should not decrease its centrality.
We have empirically checked the satisfiability of this axiom for our proposed versions of Eulerian and non-Eulerian magnitude homology centrality with $l\in \{2,3\}$ on the $1253$ graphs with up to seven nodes named in the graph atlas \cite{read1998atlas_of_graphs}, as implemented by NetworkX \cite{hagberg2008networkx}. We have found that on these graphs, the endpoint increase axiom holds for the magnitude homology for both $l=2$ and $l=3$, while for the Eulerian version only for $l=2$ the axiom was satisfied. A counterexample where the axiom does not hold for the Eulerian magnitude homology centrality is the cycle graph $C_5$ when adding an edge between two vertices of distance $2$ apart. In the $5$-cycle all nodes $v \in C_5$ have centrality $C_{\mathrm{EMH}}(v, C_5, 3) = 27$ and adding the edge decreases the centrality of the incident vertices to $25$. After further testing using the `Netzschleuder' network catalogue \cite{peixeto2020netzschleuder}, we also found a counterexample for the magnitude homology centrality in the `Aconet' graph from the Internet Topology Zoo dataset \cite{knight2011internet_topology_zoo}. 
\begin{wrapfigure}[15]{r}{0.45\textwidth}
    \centering
    \begin{subfigure}{0.45\linewidth}
        \includegraphics[width=\linewidth]{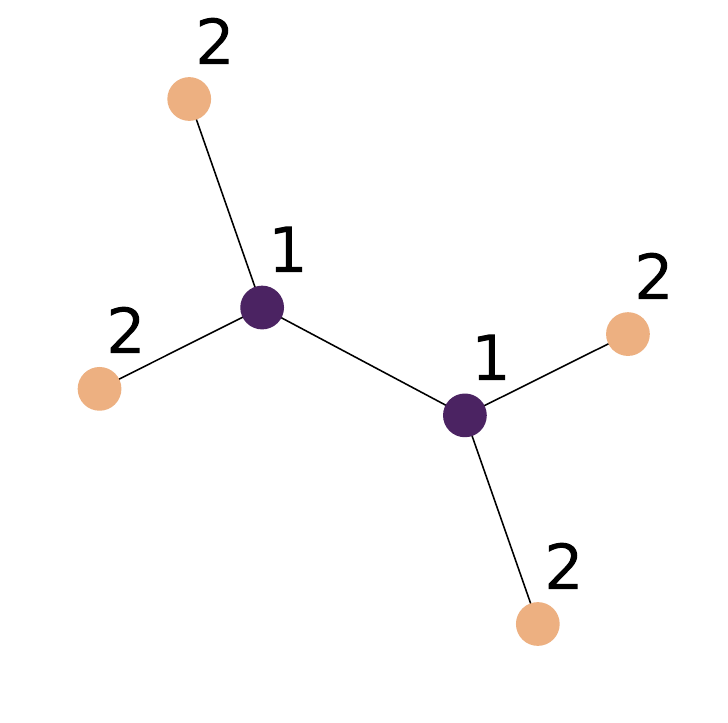}
    \end{subfigure}
    \begin{subfigure}{0.45\linewidth}
        \includegraphics[width=\linewidth]{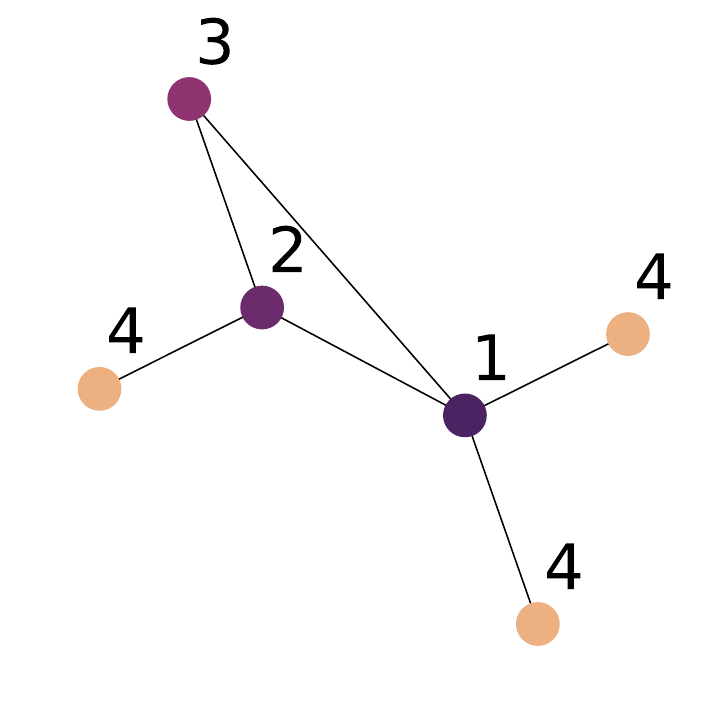}
    \end{subfigure}
    \caption{Example of a graph where adding an edge decreases the rank of magnitude homology centrality for $l=3$. The node labels are the ranks, adding the edge in the right graph decreases the rank of the top vertex.}
    \label{fig:counterex_rank_increase_axiom}
\end{wrapfigure}
\textcite{boldi2014axioms_centrality} discuss a version of the endpoint increase axiom that only requires the rank obtained from the centrality of the incident node not to decrease, in this variation we found that all four versions of the (Eulerian) magnitude homology centrality do not hold, see \Cref{fig:counterex_rank_increase_axiom} for a counterexample.

Nevertheless, there is also a weaker version where only the most central nodes are studied under the addition of incident edges. The so-called top node axiom, as stated in \textcite{meshcheryakova2024comparative_analysis_centrality}, forms a slight modification of \textcite{sabidussi1966centrality_index}.
\begin{defn}[Top node axiom]
    The top node axiom is satisfied if for any graph $G$ and any vertex $v\in G$ with the highest centrality measure, adding any edge $e$ incident to $v$ does not decrease its rank, i.e. $v$ still has the highest centrality in the modified graph.
\end{defn}

We check the top node axiom on the same $1253$ graphs from the graph atlas and find that for any of the versions of the magnitude homology centrality the axiom holds. To do some further testing, we check on the $441$ connected graphs with less than or equal to $30$ vertices in the `Netzschleuder' \cite{peixeto2020netzschleuder} catalogue (see \Cref{sec:comparison_existing_cent_measures} where we use these graphs as well) and find that also on those graphs the axiom holds true for all tested versions of (Eulerian) magnitude homology centrality.
We hence conjecture that the top node axiom is satisfied for the Eulerian and standard version of magnitude homology centrality in degrees $l\in \{2,3\}$.

\section{Results}
\label{sec:results}
For this section, we investigate further how the (Eulerian) magnitude homology centrality behaves. Concretely, in \Cref{sec:rook_shrikh} we demonstrate how the proposed centralities can be used to compare two different graphs. \Cref{sec:comparison_existing_cent_measures} gives a comparison between our proposed centrality measures based on magnitude homology and a selection of existing centrality measures. The comparison shows that the (Eulerian) magnitude centralities agree to some extent with existing measures but also offer a new way to defining central nodes in a graph.
\subsection{Distinguishing Rook and Shrikhande Graph}
\label{sec:rook_shrikh}
We want to investigate how our proposed centrality measures can be used to study the structure of two different graphs. In particular, we want to investigate its relation to distinguishing non-isomorphic graphs and demonstrate the capability on a specific example. The problem of distinguishing non-isomorphic graphs is discussed in machine learning, where graph models are often investigated with regards to their expressivity \cite{xu2019how_powerful_gnns}. An often used measure for the expressivity is the Weisfeiler--Leman (WL) isomorphism test. This test iteratively colours vertices in a graph based on information from their neighbouring nodes, which can give proof that two graphs are not isomorphic. The test does not distinguish all  pairs of non-isomorphic graphs, leading to pairs of graphs that are so-called 1-WL indistinguishable. There also exist higher-dimensional Weisfeiler-Leman tests, denoted $k$-WL tests, a sequence of isomorphism tests which strictly increase in expressivity for increasing $k$. For more details regarding the WL test we refer to \textcite{morris2023wl_go_ml}.

We have seen that the (Eulerian) magnitude centrality is a local measure
and therefore cannot assign different centrality values to vertices in a graph if the neighbourhoods $G_v^l$ and $G_v^l \setminus \{v\}$ are isomorphic for all vertices $v \in G$. Conversely, if two graphs have different magnitude centrality values for some vertices, say vertices $v \in G$ and $u \in H$ for some graphs $G$ and $H$ with $\MHcent{G, v, l} \neq \MHcent{H, u, l}$, then we can conclude that the induced graphs are not isomorphic, that is $G_v^l \ncong H_u^l$ or $G_v^l \setminus \{v\} \ncong H_u^l \setminus \{u\}$, meaning the neighbourhoods of $v$ and $u$ are structurally different.

We demonstrate this property on the two non-isomorphic strongly regular graphs with parameters $(16,6,2,2)$, namely the rook $4 \times 4$ graph $R(4,4)$ and the so-called Shrikhande graph $\mathcal{S}$. They are known to be difficult to distinguish by the WL-test. Concretely, they are indistinguishable by $1$-WL since they are regular graphs with the same degree, furthermore, they are indistinguishable by $3$-WL but distinguishable by $4$-WL \cite{arvind2020weisfeiler_leman_invariance, balcilar2021limits_message_passing_gnns}. Both graphs are vertex-transitive \cite{coolsaet2023house_of_graphs}, so the centrality measures cannot distinguish between the vertices of each graph. For any vertex $v \in R(4,4)$ and any vertex $u \in \mathcal{S}$ we have the following magnitude centrality values of the corresponding graphs:
\begin{center}
    \begin{tabular}{ccccc}
    \toprule
    &$\MHcent{-, v, 2}$ & $\MHcent{-, v, 3}$ & $\EMHcent{ - , u, 2}$ & $\EMHcent{-, u, 3}$\\
    \midrule
    $R(4,4)$ & $97$ & $553$ & $85$ & $385$ \\
    $S$ & $97$ & $529$ & $85$ & $265$\\
    \bottomrule
\end{tabular}
\end{center}
We can see that for $l=2$, the magnitude centrality values between the $4\times4$ rook graph and the Shrikhande graph are \emph{equal}, but for $l=3$ they are different, meaning that we can deduce that the structure between the neighbourhoods of the two graphs must be different. Note that the diameter of both rook and Shrikhande graph is $2$, so the $2$-neighbourhoods of any vertices contain already the entire graph.

\subsection{Comparison to Existing Centrality Measures}
\label{sec:comparison_existing_cent_measures}
In this section, we want to understand the behaviour of the (Eulerian) magnitude homology centrality on different graphs with respect to pre-existing centrality measures. Following \textcite{valente2008how_correlated}, we compute the correlation between the different centrality measures and ideally there is positive correlation between all centrality measures because they are designed to measure one concept. At the same time, some variation between the different measures is preferable since otherwise it could indicate that the newly proposed centrality measures are redundant.

\subsubsection{Centrality Measure Comparison Partners}
\label{sec:baseline_centrality_measures}
We choose the established centrality measures degree, betweenness \cite{freeman1977centrality_betweenness}, closeness \cite{bavelas1950communications_patterns_task_oriented_groups}, subgraph \cite{estrada2005subgraph_centrality}, and PageRank \cite{brin1998anatomy_large_sclae_hypertextual_web_search} as baselines to compare against. These centrality measures stem from a variety of different approaches: \textcite{valente2008how_correlated} categorize the closeness and degree centrality as geometric measures since they are using information of how many nodes exist at a given distance; PageRank as a spectral measure because it uses the eigenvector of some matrix obtained from the graph, this measure is one of the most widely used centrality measures as it is used in the ranking of web pages; Betweenness is path based by measuring on how many shortest path a given node lies. We additionally compare to the subgraph centrality, another spectral centrality measure \cite{menara2024computing_eulerian_mh} which is considering the number of closed walks starting and ending at each node with the idea of capturing the number of subgraphs that the node takes part in \cite{estrada2005subgraph_centrality}. Formal definitions of the  chosen baseline centralities can be found for example in \textcite{saxena2020centrality_measures_complex_networks, estrada2005subgraph_centrality}. We use default parameters for all these comparison partners, specifically, we set the damping parameter $\alpha = 0.85$ for PageRank centrality, which was observed to perform well in real-world networks and widely used \cite{saxena2020centrality_measures_complex_networks}.

\subsubsection{Zachary Karate Club and Frucht Graph}
As an initial check, we analyse individual graphs and compute the Pearson correlation coefficients between the standard and Eulerian magnitude homology centrality for $l \in \{2,3\}$ and the above described established centrality measures for the nodes in each graph. We find that for certain graphs, the correlation between all considered centrality measures is high. For example, in the Zachary karate club network \cite{zachary1977information_flow_model}, for which we show the centralities and correlation in \Cref{fig:cent_comparison_Zachary_Karate_Club}, the correlation between all considered centrality measures is higher than $0.84$, indicating that the different centrality measures mostly agree on what the most important nodes are.\begin{figure}[t]
    \begin{minipage}{0.49\textwidth}
    \begin{subfigure}{0.32\linewidth}
        \includegraphics[width=\linewidth]{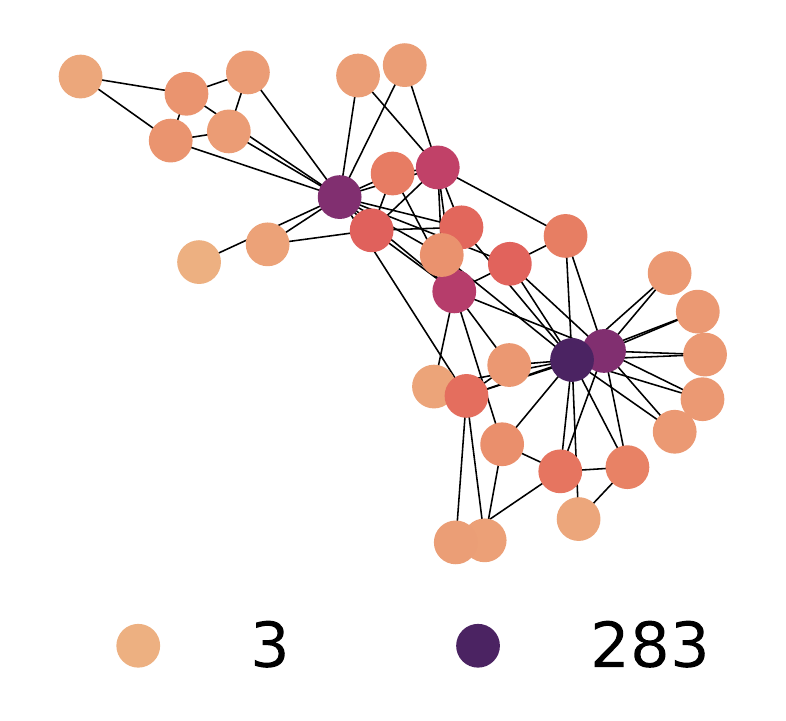}
        \caption{$C_{\mathrm{EMH}}$ with $l=2$.}
    \end{subfigure}
    \begin{subfigure}{0.32\linewidth}
        \includegraphics[width=\linewidth]{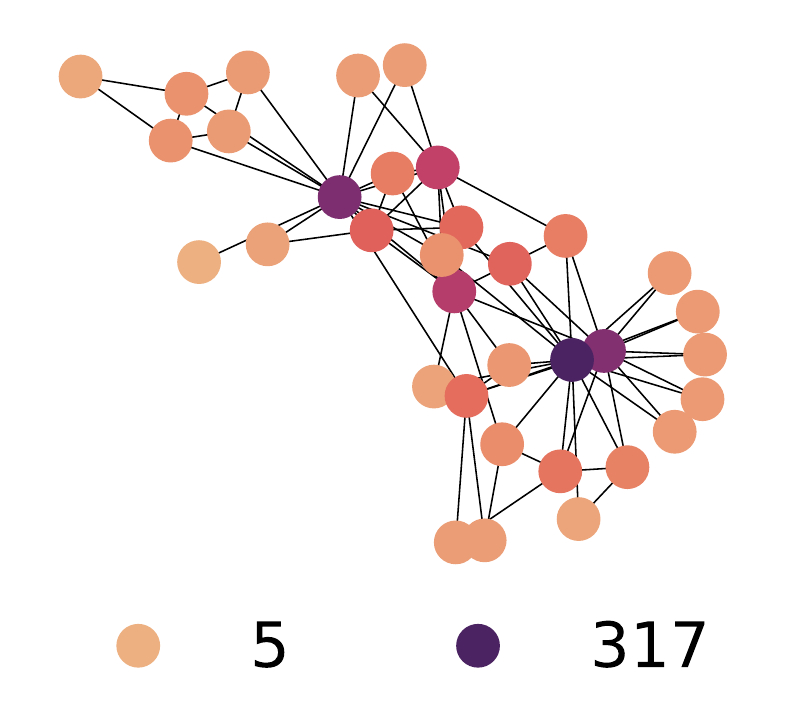}
        \caption{$C_{\mathrm{MH}}$ with $l=2$.}
    \end{subfigure}
    \begin{subfigure}{0.32\linewidth}
        \includegraphics[width=\linewidth]{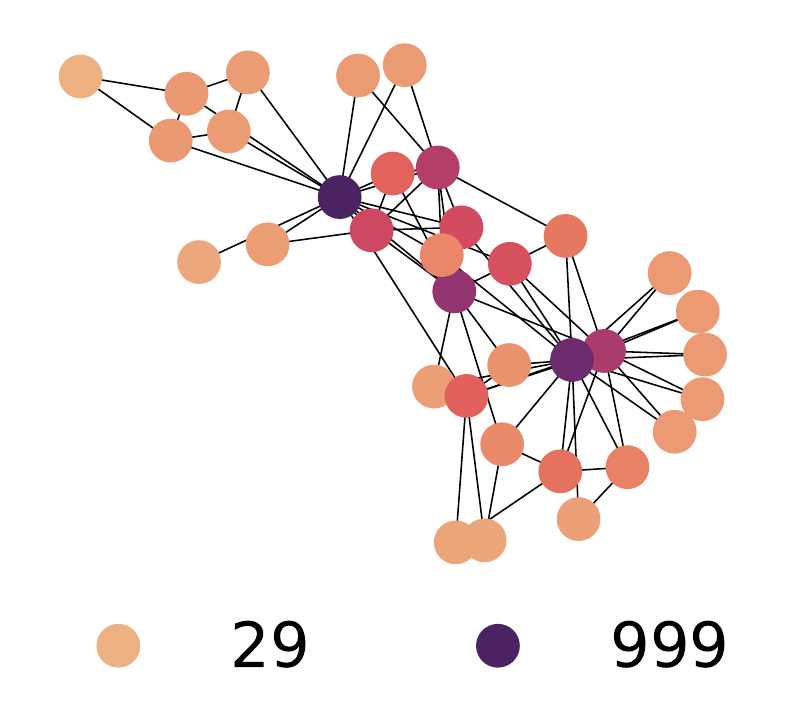}
        \caption{$C_{\mathrm{EMH}}$ with $l=3$.}        
    \end{subfigure}\\
    \begin{subfigure}{0.32\linewidth}
        \includegraphics[width=\linewidth]{Figures/centrality_plots_Zachary_Karate_Club/EMH_centrality_l=3.pdf}
        \caption{$C_{\mathrm{MH}}$ with $l=3$.}
    \end{subfigure}
    \begin{subfigure}{0.32\linewidth}
        \includegraphics[width=\linewidth]{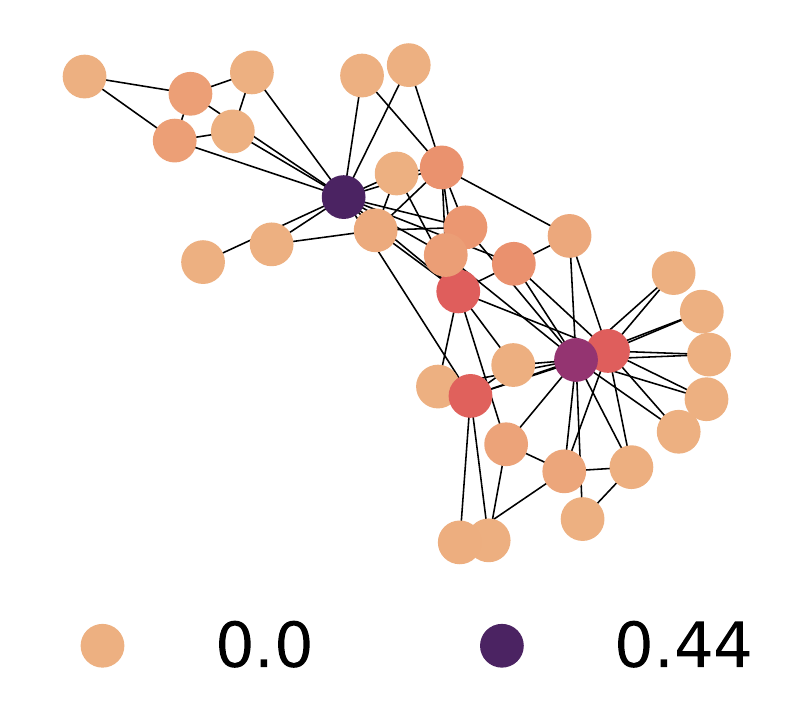}
        \caption{Betweenness centrality.}
    \end{subfigure}
    \begin{subfigure}{0.32\linewidth}
        \includegraphics[width=\linewidth]{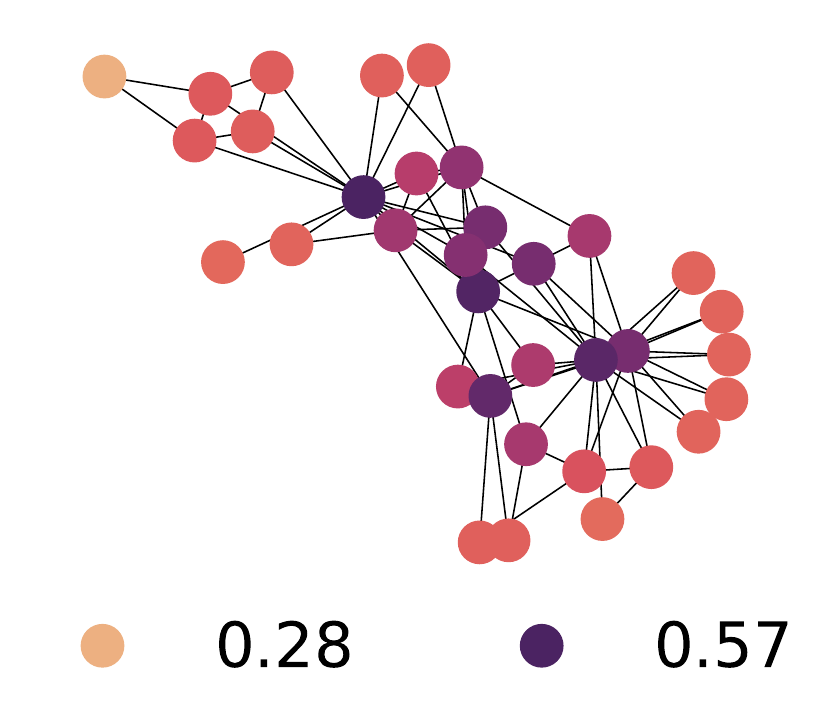}
        \caption{Closeness centrality.}
    \end{subfigure} \\
    \begin{subfigure}{0.32\linewidth}
        \includegraphics[width=\linewidth]{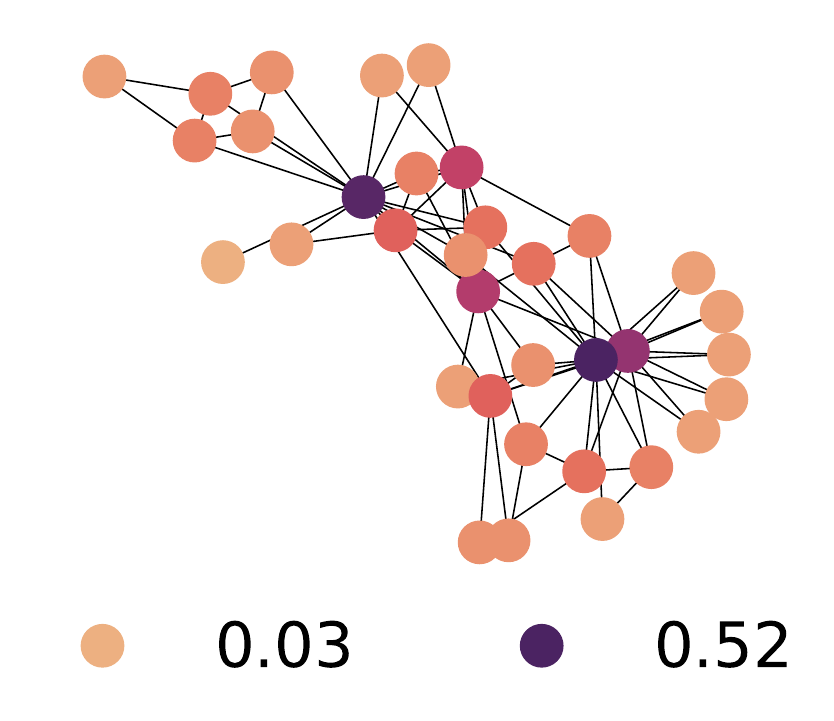}
        \caption{Degree centrality.}
    \end{subfigure}
    \begin{subfigure}{0.32\linewidth}
        \includegraphics[width=\linewidth]{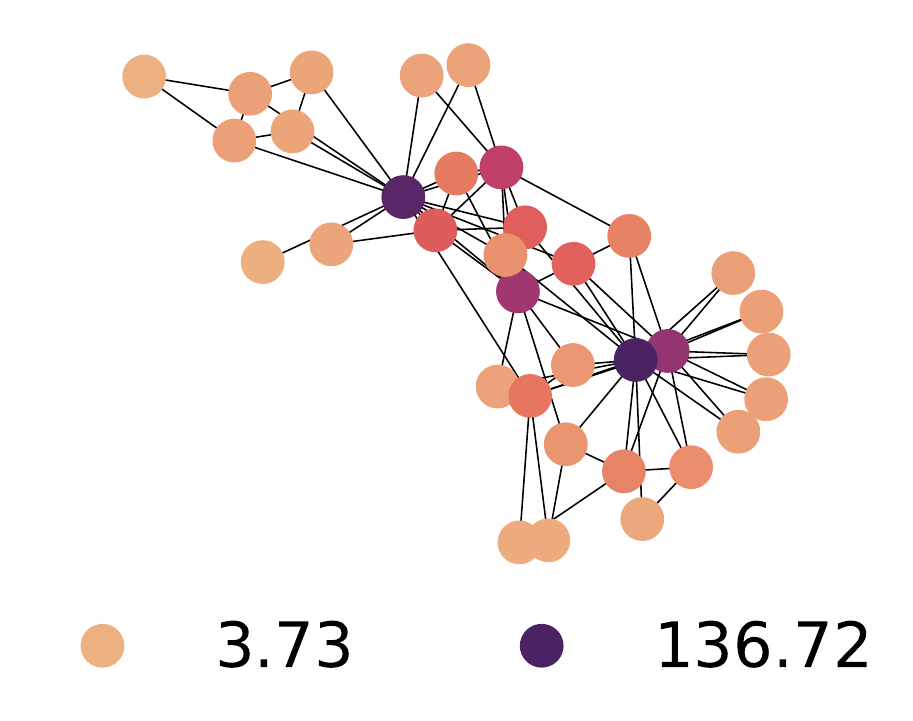}
        \caption{Subgraph centrality.}
    \end{subfigure}
    \begin{subfigure}{0.32\linewidth}
        \includegraphics[width=\linewidth]{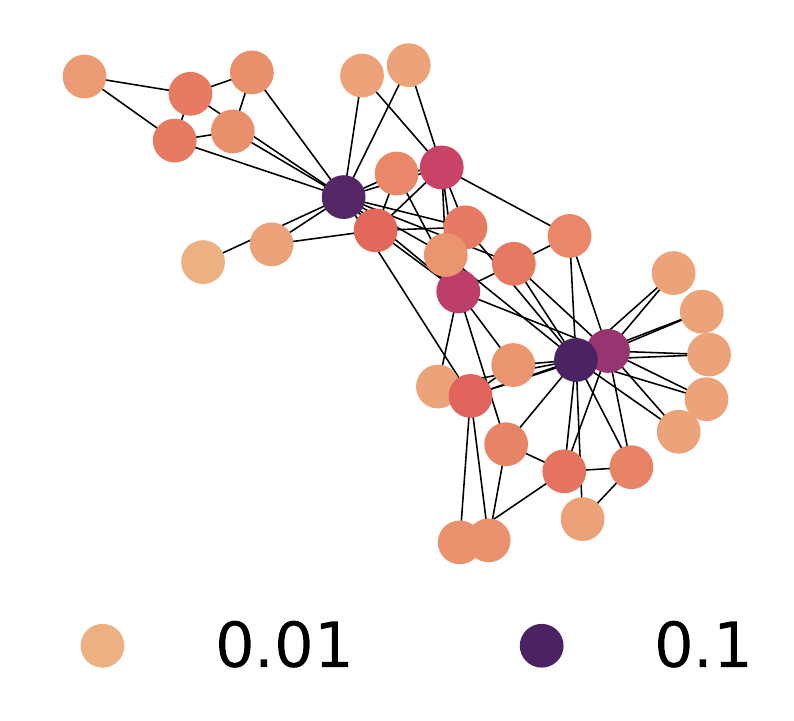}
        \caption{PageRank centrality.}
    \end{subfigure} 
    \end{minipage} \hfill
        \begin{minipage}[c]{0.49\textwidth}
        \begin{subfigure}{\linewidth}
            \centering
            \includegraphics[width=\linewidth]{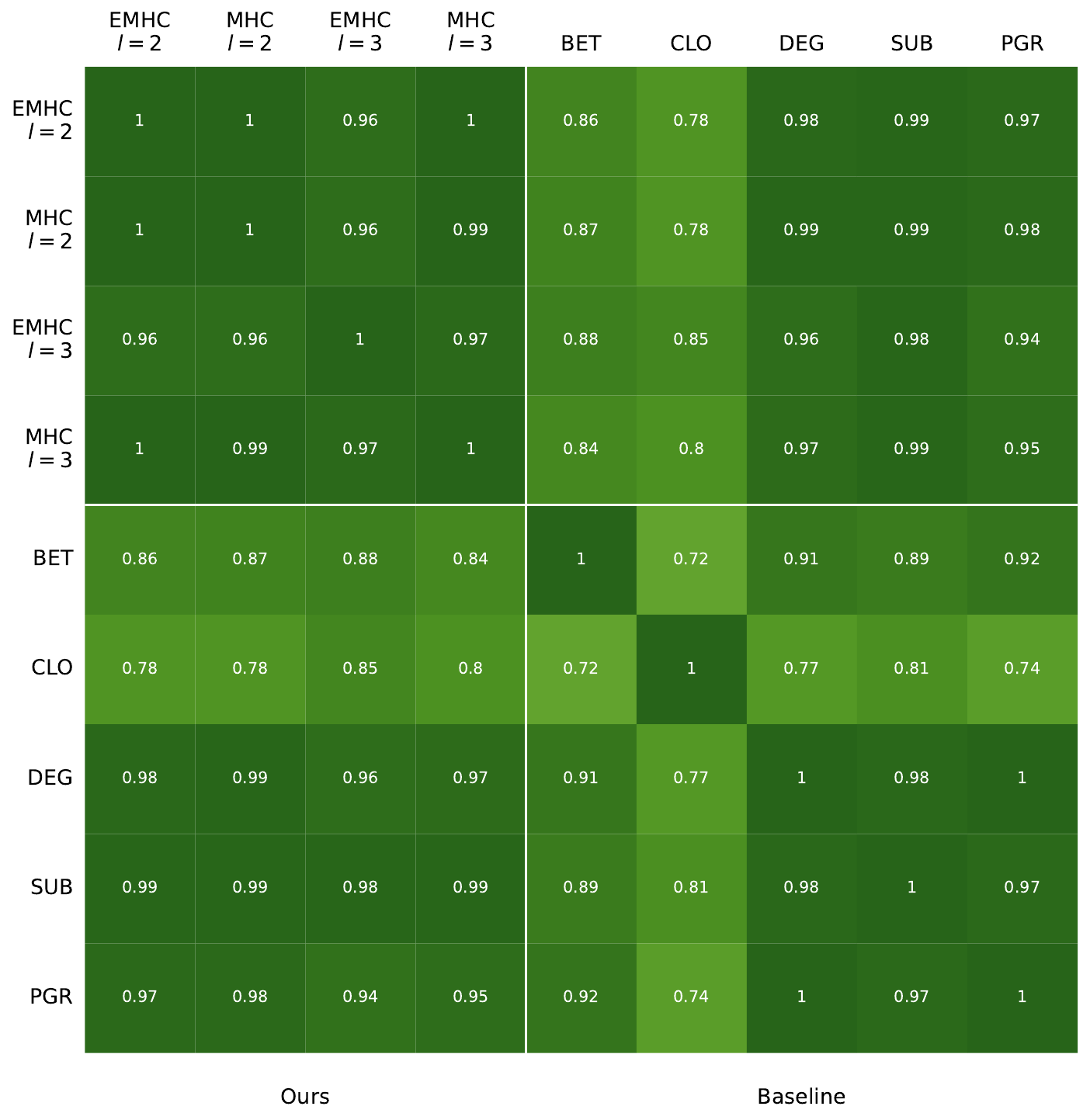}
            \caption{Pearson correlation coefficients for measuring the correlation between different centrality measures. A correlation of $+1$~(or $-1$, respectively) indicates that two measures are strongly (anti-)correlated meaning that there is a \emph{linear} relationship between the two of them.}
        \end{subfigure}
    \end{minipage}
    \caption{Comparison of different centrality measures for Zachary's Karate Club network.}
    \label{fig:cent_comparison_Zachary_Karate_Club}
\end{figure}  In other graphs, such as the Frucht graph seen in \Cref{fig:cent_comparison_Frucht}, we observe different correlations, ranging from $-0.87$ between Eulerian magnitude homology centrality with $l=2$ and Betweenness centrality over $0.35$ between Eulerian magnitude homology with $l=3$ and Betweenness centrality to $0.87$ between Eulerian magnitude homology centrality with $l=2$ and Subgraph Centrality. These examples demonstrate that we capture different aspects of the importance of nodes with the (Eulerian) magnitude homology centrality. 
\begin{figure}[t]
    \begin{minipage}{0.49\textwidth}
        \begin{subfigure}{0.32\linewidth}
            \includegraphics[width=\linewidth]{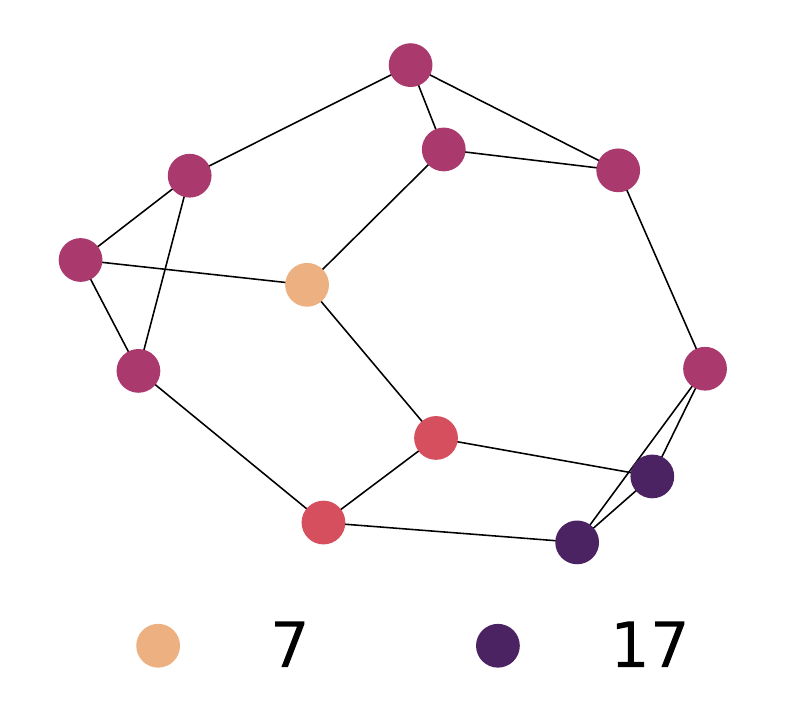}
            \caption{$C_{\mathrm{EMH}}$ with $l=2$.}
        \end{subfigure}
        \begin{subfigure}{0.32\linewidth}
            \includegraphics[width=\linewidth]{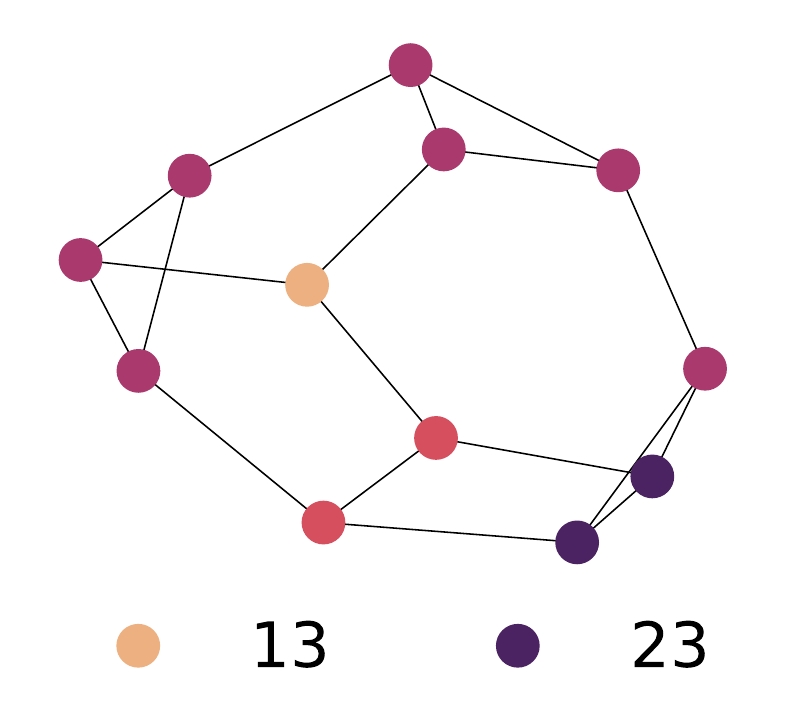}
            \caption{$C_{\mathrm{MH}}$ with $l=2$.}
        \end{subfigure}
        \begin{subfigure}{0.32\linewidth}
            \includegraphics[width=\linewidth]{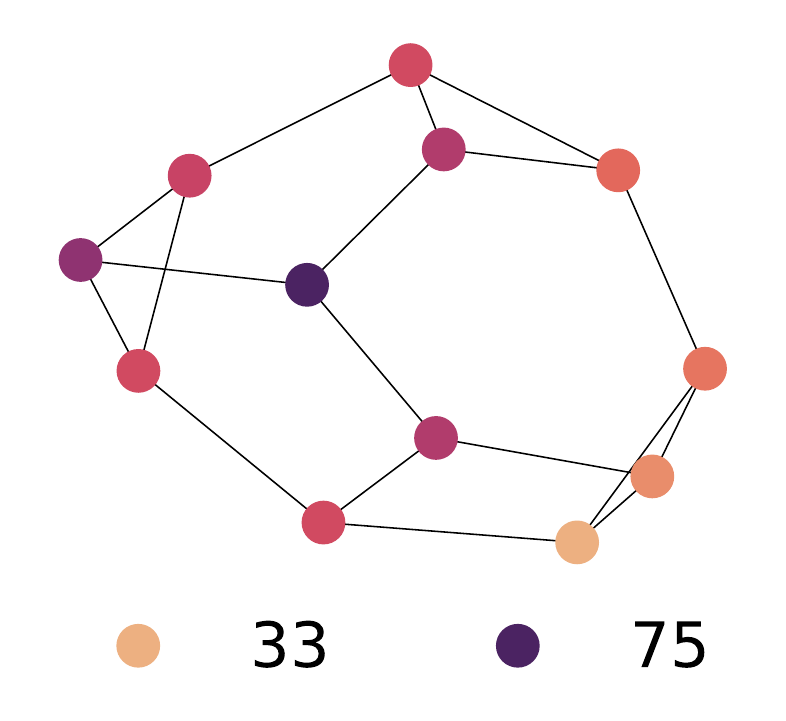}
            \caption{$C_{\mathrm{EMH}}$ with $l=3$.}        
        \end{subfigure}\\
        \begin{subfigure}{0.32\linewidth}
            \includegraphics[width=\linewidth]{Figures/centrality_plots_frucht/EMH_centrality_l=3.pdf}
            \caption{$C_{\mathrm{MH}}$ with $l=3$.}
        \end{subfigure}
        \begin{subfigure}{0.32\linewidth}
            \includegraphics[width=\linewidth]{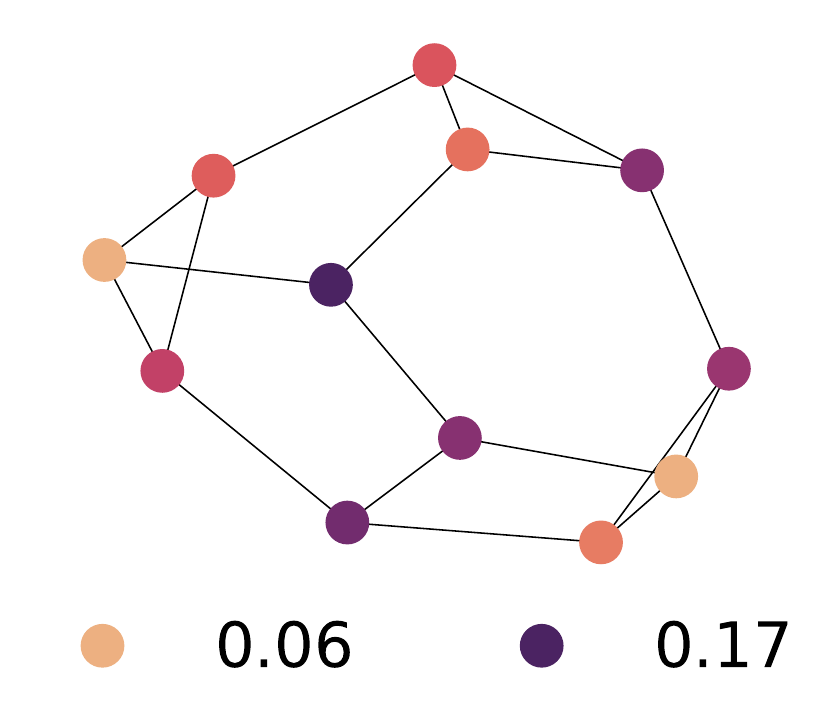}
            \caption{Betweenness centrality.}
        \end{subfigure}
        \begin{subfigure}{0.32\linewidth}
            \includegraphics[width=\linewidth]{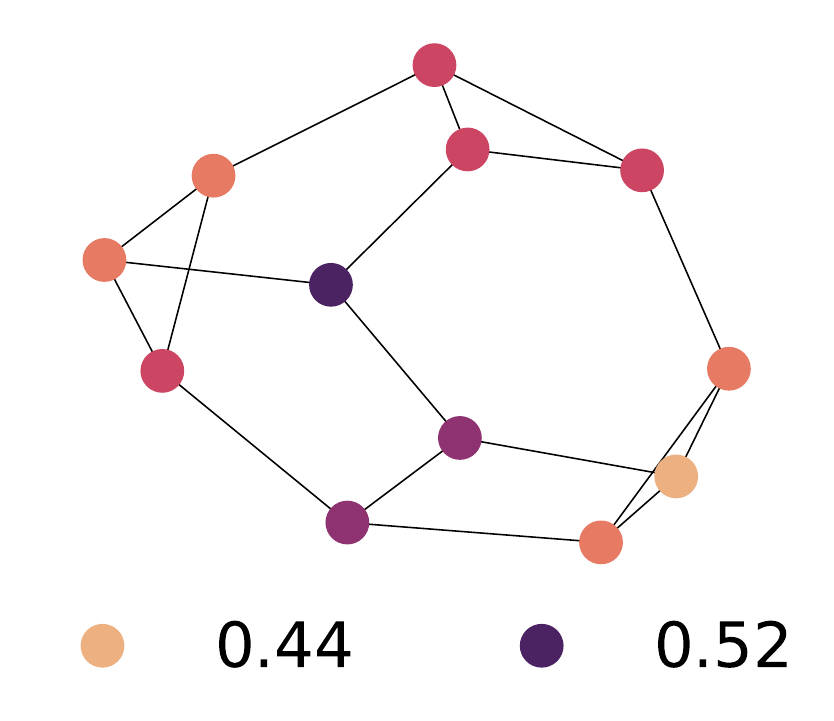}
            \caption{Closeness centrality.}
        \end{subfigure} \\
        \begin{subfigure}{0.32\linewidth}
            \includegraphics[width=\linewidth]{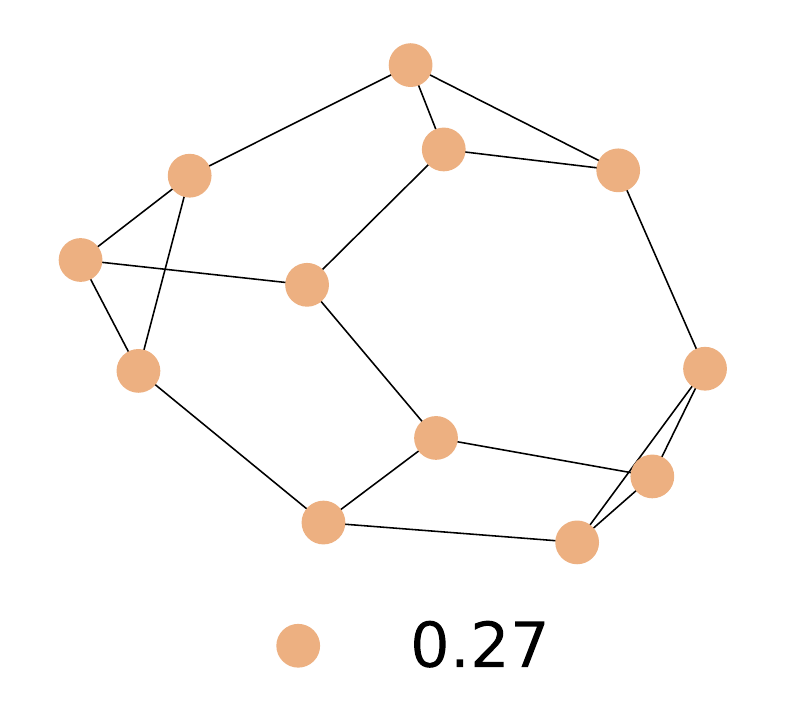}
            \caption{Degree centrality.}
        \end{subfigure}
        \begin{subfigure}{0.32\linewidth}
            \includegraphics[width=\linewidth]{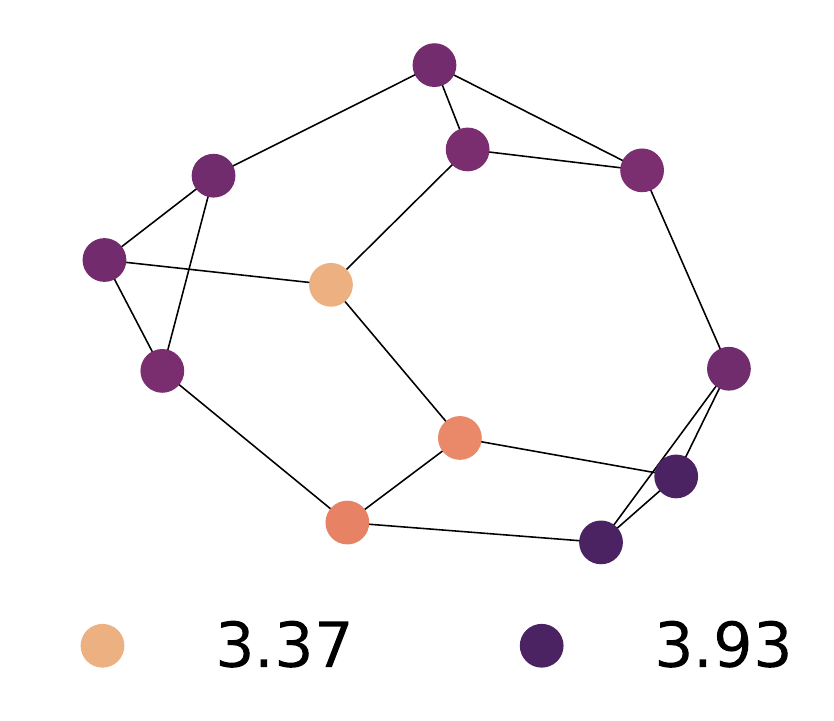}
            \caption{Subgraph centrality.}
        \end{subfigure}
        \begin{subfigure}{0.32\linewidth}
            \includegraphics[width=\linewidth]{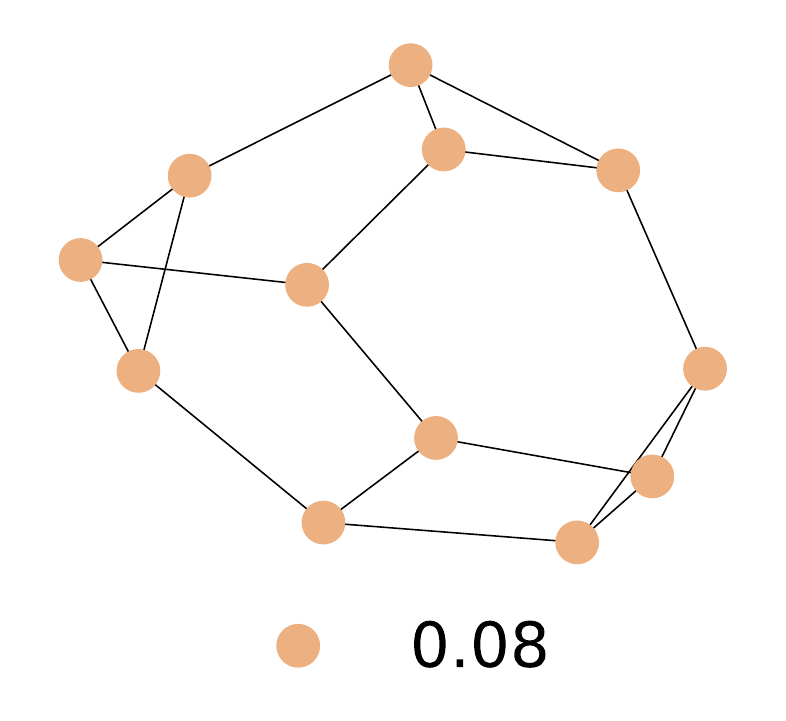}
            \caption{PageRank centrality.}
        \end{subfigure} 
    \end{minipage} \hfill
    \begin{minipage}[c]{0.49\textwidth}
        \begin{subfigure}{\linewidth}
            \centering
            \includegraphics[width=\linewidth]{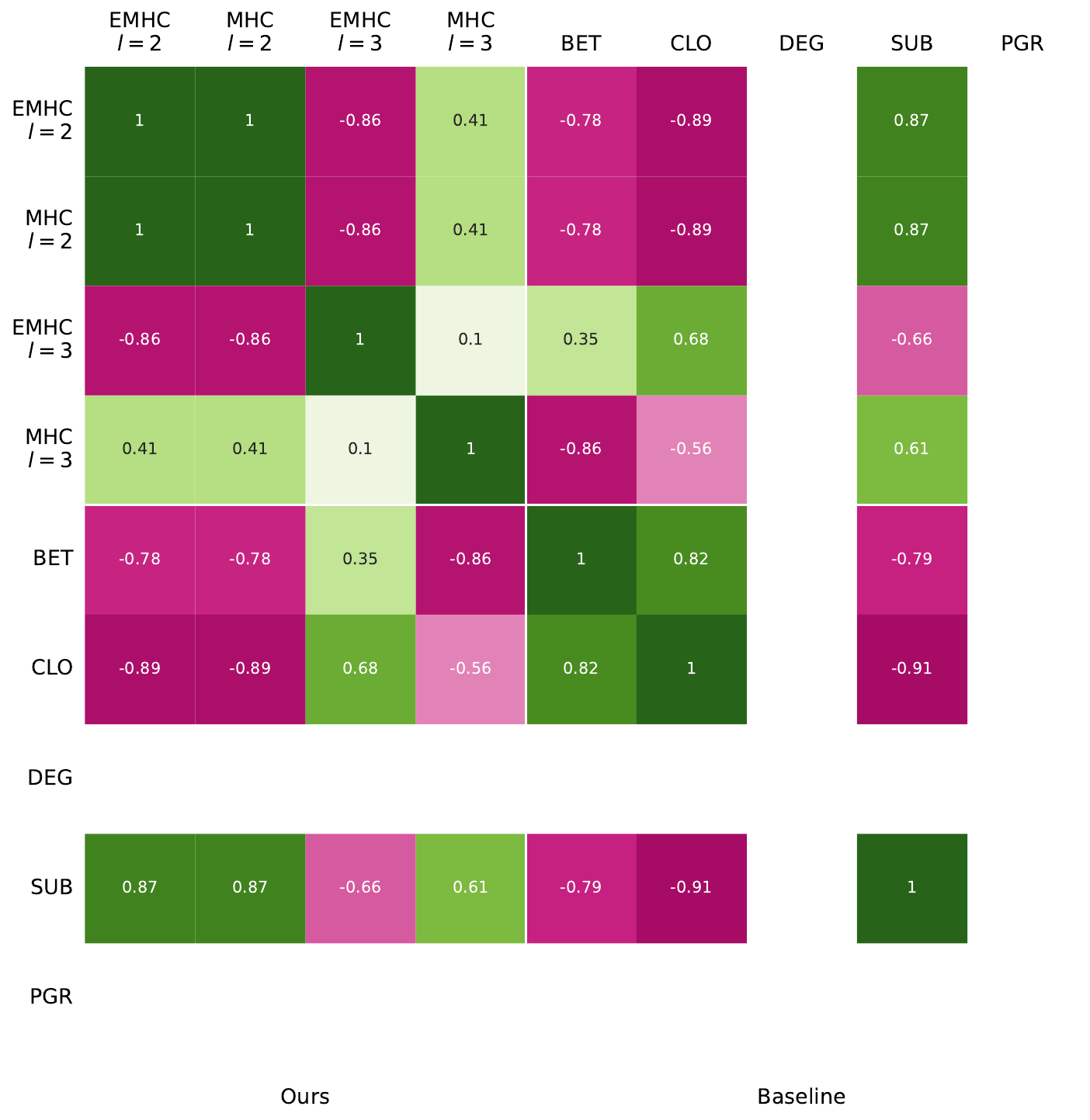}
            \caption{Pearson correlation coefficients; if a centrality measure is constant over all vertices in the graph, we do not report any correlation.}       
        \end{subfigure}
    \end{minipage}
    \caption{Comparison of different centrality measures for the Frucht graph.}
    \label{fig:cent_comparison_Frucht}
\end{figure} 
\subsubsection{Correlation over Various Network Datasets}
To understand the comparison to existing centrality measures in a more principled manner, we investigate the correlation of the (Eulerian) magnitude homology centrality to the baseline centrality measures over a variety of different graphs. We consider all connected undirected graphs from the `Netzschleuder' network catalogue \cite{peixeto2020netzschleuder} that have less than or equal to $30$ nodes, leaving us with $441$ graphs \cite{atran2009philippinesambassador,mcauley2012discovering_social_circles,knight2011internet_topology_zoo,grant_1973_dominance,kaminski2018moviegalaxies,read1954cultures_central_highlands,rhodes2009inferring_missing_links,hobson2013evolution_modern_capitalism,gelardi2020measuring_social_networks_primates}. The restriction to connected graphs is due to the fact that the closeness centrality in its original form is only defined for connected graphs \cite{saxena2020centrality_measures_complex_networks}. For each of these graphs we compute the Pearson correlation coefficient between the centrality measures of all nodes and average them over all graphs; the resulting correlations with standard deviation are reported in \Cref{fig:avg_correlation}.
When comparing the correlation of our proposed (Eulerian) magnitude homology centrality, we note the following: The correlation between all versions of the (Eulerian) magnitude homology centrality is on average at least 0.96, concretely for $l=2$ there does not seem to be an empirical difference between using the Eulerian versus the standard version. In the comparison to the existing centrality measures, all the computed variations of (Eulerian) magnitude homology centrality correlate highest with the subgraph centrality, both of these in some way counting specific subgraphs structure that a node participates in. The lowest correlation of all variants of (Eulerian) magnitude homology centrality with existing centrality measures is with the betweenness centrality, ranging from $0.73$ to $0.78$.
We find an overall high average correlation over different centrality measures for the studied set of graphs. We want to point out that \textcite{valente2008how_correlated} have also computed the average correlation of centrality measures of $58$ networks obtained from different studies on bounded communities. The correlations they report are lower than the ones we obtain when comparing the correlations included in both studies. This leads us to conclude that  on the set of graphs we have chosen,  established centrality measures tend to agree much more strongly concerning which nodes are the most central.
\begin{figure}
\begin{subfigure}[t]{0.48\textwidth}
    \centering
    \includegraphics[width=\linewidth]{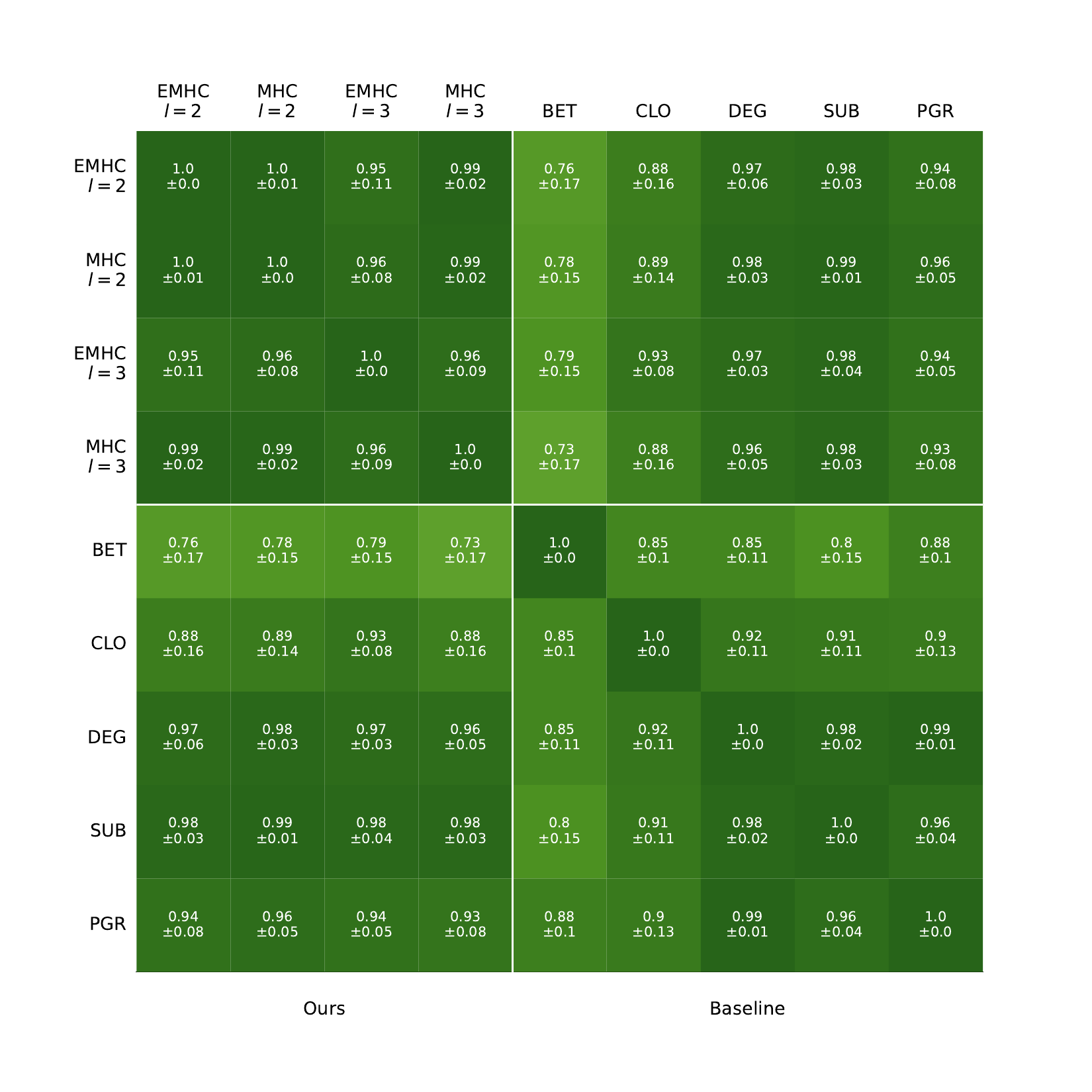}
    \caption{Average Pearson correlation coefficient over each connected graph in the `Netzschleuder' catalogue restricted to less than or equal to $30$ vertices.}
    \label{fig:avg_correlation}
\end{subfigure}
\hfill \begin{subfigure}[t]{0.48\textwidth}
    \centering
    \includegraphics[width=\linewidth]{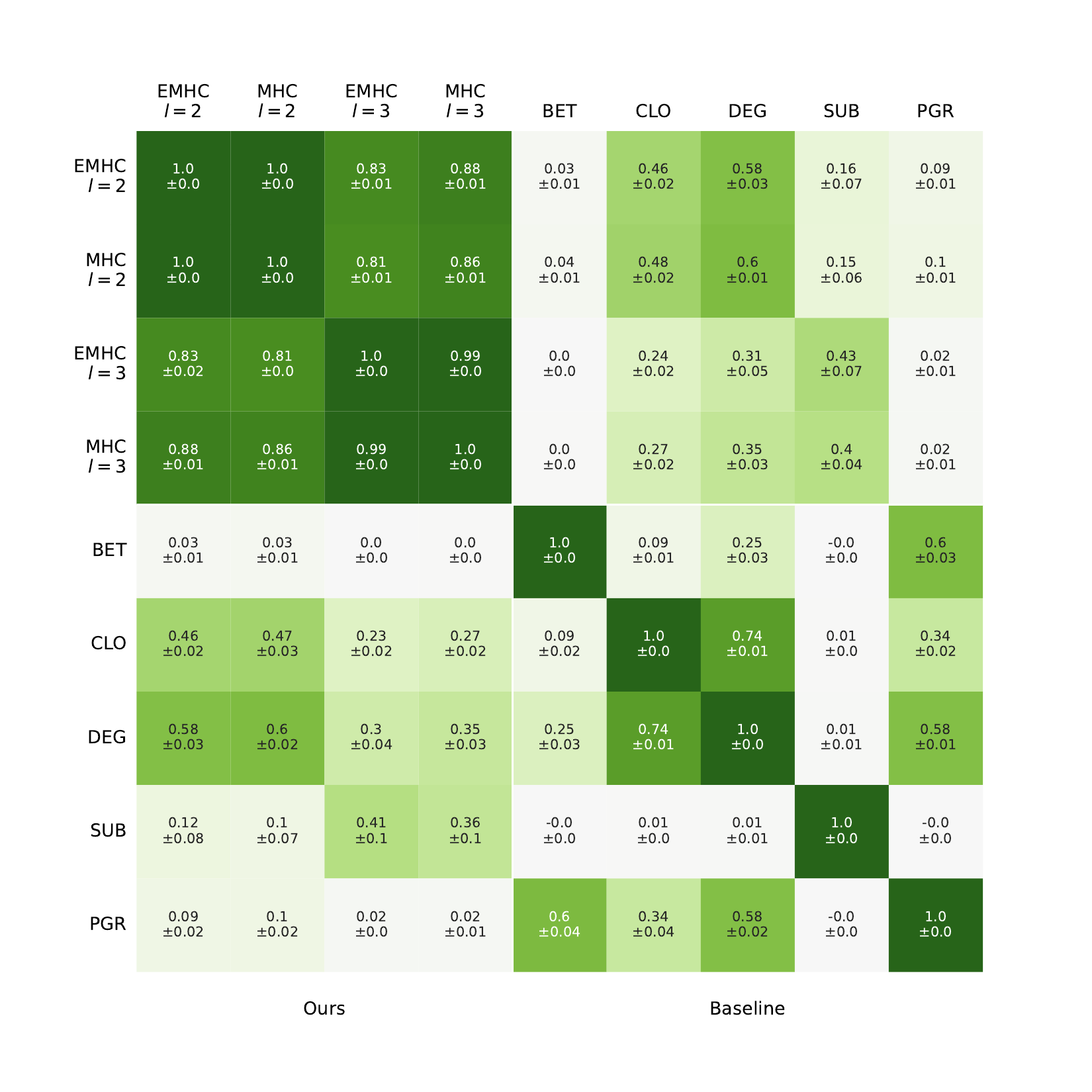}
    \caption{Mean and standard deviation of the $R^2$-score from training a linear regression with $5$-fold cross-validation on the centrality measures of all vertices of the connected graphs in the `Netzschleuder' catalogue restricted to less than or equal to $30$ vertices.}
    \label{fig:lin_reg}
\end{subfigure}
\label{fig:comparison}
\caption{Comparison between our proposed centrality measures (Eulerian) magnitude homology centrality (EMHC and MHC respectively) for $l=2$ and $l=3$ and existing centrality measures betweenness(BET), closeness (CLO), degreee (DEG), subgraph (SUB), and PageRank (PGR)}
\end{figure}

To further compare the different centrality measures, we generalize the correlation to predicting the centrality value of one measure with another by using a linear regression training on the set of vertices we obtain from the different graphs. Again, we compute the centrality values for al $441$ connected graphs from the `Netzschleuder' catalogue with less than or equal to $30$ vertices and concatenate the values of all vertices in the graphs. We then perform a $5$-fold cross-validation with an initial shuffle of the data and fit a linear regression model. We report the average and standard deviation of the coefficient of determination $R^2$ over the different splits in \Cref{fig:lin_reg}.
The results show that empirically the Eulerian and standard magnitude homology centralities can be predicted well from each other. Furthermore, the change from $l=2$ to $l=3$ does still give an average $R^2$-score of at least $0.81$, showing that there is some new information when considering the higher dimensions in the tested empirical datasets, but we do already see a strong correlation between the two parameter choices. From the existing centrality measures, we observe that the degree and closeness centrality measure can be predicted with an $R^2$-score of $0.74$ from each other, forming another cluster in the heatmap. Between our proposed magnitude homology centrality measures and the established measures, the highest $R^2$ score occurs with the degree centrality and standard and Eulerian magnitude homology with $l=2$. This might reflect the relation we have shown in \Cref{prop:EMHC_l=0}. Comparing the rest of the centrality measures, the results do not indicate a clear linear trend between the proposed (Eulerian) magnitude homology centralities and the baselines, indicating that (Eulerian) magnitude homology centrality indeed captures different, novel aspects of vertex centrality.
Overall, we conclude that our proposed centrality measures are aligned with established centrality measures on certain graphs, while still providing new insights into the notion of centrality.

\section{Discussion}

We introduced a novel family of vertex centrality measure by using the magnitude homology and its Eulerian version. We showed that these measure are local and thus obtained the possibility to tune the range of the neighbourhood that is taken into account when computing the centrality of a node, allowing for flexibility and control of the computational complexity. Furthermore, we provided evidence (partially formally and partially empirically) that the proposed measures satisfy properties that are characteristic for a centrality measure, and we performed extensive comparison against established centrality measures. For obtaining the results presented in this paper, we have implemented a na\"ive algorithm to compute the (Eulerian) magnitude homology centralities. This algorithm is still limited in terms of graph size but we believe that there is great potential for future improvements, for instance via approximation schemes.

Our work also provides several other follow-up directions that will help deepen our understanding of the proposed centrality measures. Concretely, we have observed that in many regular graphs, the (Eulerian) magnitude homology centrality can offer a more refined ranking of the nodes than just the degree. Further investigations into the discriminative power of the proposed centrality measures remain to be explored. Starting from the observed correlation between our proposed centrality measures and the subgraph centrality, we assume relation between our proposed centrality measures and the spectrum of the graph can be further studied, potentially gaining understanding about the relation to subgraph counting and the subgraph centrality in particular.

\section*{Acknowledgements}

The authors thank Katharina Limbeck and Kavir Sumaraj for their helpful feedback. This work has received funding from the Swiss State Secretariat for
Education, Research, and Innovation~(SERI).

\printbibliography

@misc{magnitude_evaluation_diversity_latent_space,
      title={Metric Space Magnitude for Evaluating the Diversity of Latent Representations}, 
      author={Limbeck, Katharina and Andreeva, Rayna and others},
      howpublished = {28th Conference on Neural Information Processing Systems (NeurIPS 2024)},
      year={2024},
}

@article{kaneta2021mh_metric_spaces_order_complexes,
      title={Magnitude Homology of Metric Spaces and Order Complexes}, 
      author={Kaneta, Ryuki and Yoshinaga, Masahiko},
      year={2021},
      journal = {Bulletin of the London Mathematical Society},
      volume = {53},
      number = {3},
      pages = {893-905},
}

@misc{magnitude_and_generalisations_in_NN,
      title={Metric Space Magnitude and Generalisation in Neural Networks}, 
      author={Andreeva, Rayna and Limbeck, Katharina and others},
      year={2023},
      eprint={2305.05611},
      archivePrefix={arXiv},
}

@misc{menara2024computing_eulerian_mh,
      title={Computing Eulerian Magnitude Homology}, 
      author={Menara, Giuliamaria and Manzoni, Luca},
      year={2024},
      eprint={2410.10376},
      archivePrefix={arXiv},
}

@misc{caputi2025emh_diagonality_injective_words_regular_path_homology,
      title={Eulerian Magnitude Homology: Diagonality, Injective Words, and Regular Path Homology}, 
      author={Caputi, Luigi and Menara, Giuliamaria},
      year={2025},
      eprint={2503.06722},
      archivePrefix={arXiv},
}

@misc{gu2018magnitude_homology_morse,
      title={Graph Magnitude Homology via Algebraic Morse Theory}, 
      author={Gu, Yuzhou},
      year={2018},
      eprint={1809.07240},
      archivePrefix={arXiv},
}

@misc{giusti2024eulerianMH_subgraph_structure,
      title={Eulerian Magnitude Homology: Subgraph Structure and Random Graphs}, 
      author={Gusti, Chad and Menara Giuliamaria},
      year={2024},
      eprint={2403.09248},
      archivePrefix={arXiv},
}

@article{hepworth2017categorifying,
	year = {2017},
	publisher = {International Press of Boston},
	volume = {19},
	number = {2},
	pages = {31--60},
	author = {Richard Hepworth and Simon Willerton},
	title = {Categorifying the magnitude of a graph},
	journal = {Homology, Homotopy and Applications}
}

@article{asao2024girth_mh_diagonality,
    author = {Asao, Yasuhiko and Hiraoka, Yasuaki and Kanazawa, Shu},
    title = {Girth, magnitude homology and phase transition of diagonality},
    journal = {Proceedings of the Royal Society of Edinburgh},
    volume = {154},
    pages = {221--247},
    year = {2024},
}

@article{morris2023wl_go_ml,
    author = {Christopher Morris and Yaron Lipman and Haggai Maron and Bastian Rieck and Nils M. Kriege and Martin Grohe and Matthias Fey and Karsten Borgwardt},
    title = {Weisfeiler and Leman go Machine Learning: The Story so far},
    journal = {Journal of Machine Learning Research},
    year = {2023},
    volume  = {24},
    number  = {333},
    pages   = {1--59},
}

@article{valente2008how_correlated,
    author = {Valente, Thomas W. and Coronges, Kathryn and Lakon, Cynthia and Costenbader, Elizabeth},
    title = {How Correlated Are Network Centrality Measures?},
    journal = {Connect (Tor)},
    year = {2008},
    pages = {16--26},
    volume = {28},
    number = {1},
}

@misc{saxena2020centrality_measures_complex_networks,
      title={Centrality Measures in Complex Networks: A Survey}, 
      author={Saxena, Akrati and Iyengar, Sudarshan},
      year={2020},
      eprint={2011.07190},
      archivePrefix={arXiv},
}

@article{leinster2013magnitude_metric,
    author = {Leinster, Tom},
    title = {The magnitude of metric spaces},
    journal = {Documenta Mathematica},
    year = {2013},
    volume = {18},
    pages = {857--905}
}

@book{leinster2021entropy_diversity,
    author = {Leinster, Tom},
    title = {Entropy and Diversity},
    publisher = {Cambridge University Press},
    year = {2021},
}

@article{leinster2019magnitude_graph,
    author = {Leinster, Tom},
    title = {The magnitude of a graph},
    journal = {Mathematical Proceedings of the Cambridge Philosophical Society},
    year = {2019},
    volume={166},
    number={2},
    pages={247–264},
}

@article{coolsaet2023house_of_graphs,
	title = {House of {G}raphs 2.0: {A} database of interesting graphs and more},
	volume = {325},
	journal = {Discrete Applied Mathematics},
	author = {Coolsaet, Kris and D’hondt, Sven and Goedgebeur, Jan},
	year = {2023},
	pages = {97--107},
    note = {Available at \url{https://houseofgraphs.org}}
}

@misc{peixeto2020netzschleuder,
    author = {Peixoto, Tiago P.},
    title = {The Netzschleuder network catalogue and repository},
    year = {2020},
    url = {https://networks.skewed.de/}
}

@book{weibel2013homological_algebra,
    author = {Weibel, Charles A.},
    title = {An introduction to homological algebra},
    publisher = {Cambridge University Press},
    year = {2013},
}

@article{boldi2014axioms_centrality,
       author = {Boldi, Paolo and Vigna, Sebastiano},
    title = {Axioms for centrality} ,
    journal = {Internet Mathematics},
    year = {2014},
    volume = {10},
    pages = {222--262},
}

@article{meshcheryakova2024comparative_analysis_centrality,
    author = {Meshcheryakova, N. and Shvydun, S} ,
    title = {A Comparative Analysis of Centrality Measures in Complex Networks},
    journal = {Automation and Remote Control},
    year = {2024} ,
    volume = {85},
    number = {8},
    pages = {658--695},
}

@article{leinster2021magnitude_homology_enriched_cat_metric_space,
    author = {Leinster, Tom and Shulman, Michael},
    title = {Magnitude homology of enriched categories and metric spaces},
    journal = {Algebraic \& Geometric Topology},
    volume = {21},
    pages = {2175--2221},
    year = {2021},
}

@misc{gomi2025magnitude_homology_geodesic,
      title={Magnitude homology of geodesic space}, 
      author={Gomi, Kiyonori},
      year={2025},
      eprint={1902.07044},
      archivePrefix={arXiv},
}

@article{freeman1977centrality_betweenness,
      title={A Set of Measures of Centrality Based on Betweenness}, 
      author={Freeman, Linton C.},
      year={1977},
      journal = {Sociometry},
      number = {1},
      volume = {40},
      pages = {35--41},
}

@article{sabidussi1966centrality_index,
    author = {Gert Sabidussi},
    title = {The centrality index of a graph},
    journal = {Psychometrika},
    year = {1966},
    volume = {31},
    number = {4},
    pages = {581--603},
}

@article{arvind2020weisfeiler_leman_invariance,
	title = {On {Weisfeiler}-{Leman} invariance: {Subgraph} counts and related graph properties},
	volume = {113},
	journal = {Journal of Computer and System Sciences},
    volume = {113},
	author = {Arvind, Vikraman and Fuhlbrück, Frank and Köbler, Johannes and Verbitsky, Oleg},
	year = {2020},
	pages = {42--59},
}

@InProceedings{balcilar2021limits_message_passing_gnns,
  title = 	 {Breaking the Limits of Message Passing Graph Neural Networks},
  author =       {Balcilar, Muhammet and Heroux, Pierre and Gauzere, Benoit and Vasseur, Pascal and Adam, Sebastien and Honeine, Paul},
  booktitle = 	 {Proceedings of the 38th International Conference on Machine Learning},
  pages = 	 {599--608},
  year = 	 {2021},
  editor = 	 {Meila, Marina and Zhang, Tong},
  volume = 	 {139},
}

@article{estrada2005subgraph_centrality,
  author = {Estrada, Ernesto and Rodr\'{\i}guez-Vel\'azquez, Juan A.},
  journal = {Physical Review E},
  volume = {71},
  pages = {056103},
  year = {2005},
}

@article{bavelas1950communications_patterns_task_oriented_groups,
    author = {Alex Bavelas},
    title = {Communication Patterns in Task‐Oriented Groups},
    journal = {The Journal of the Acoustical Society of America},
    year = {1950},
    volume = {22},
    number = {6},
    pages = {725--730}
}

@article{brin1998anatomy_large_sclae_hypertextual_web_search,
	title = {The anatomy of a large-scale hypertextual Web search engine},
	volume = {30},
	journal = {Computer Networks and ISDN Systems},
	author = {Brin, Sergey and Page, Lawrence},
	year = {1998},
	pages = {107--117},
}

@inproceedings{
xu2019how_powerful_gnns,
title={How Powerful are Graph Neural Networks?},
author={Keyulu Xu and Weihua Hu and Jure Leskovec and Stefanie Jegelka},
booktitle={International Conference on Learning Representations},
year={2019},
}

@article{zachary1977information_flow_model,
    author = {Zachary, Wayne W.},
    title = {An Information Flow Model for Conflict and Fission in Small Groups},
    journal = {Journal of Anthropological Research},
    year = {1977},
    volume = {33},
    number = {4},
}

@book{wegener2005complexity_theory,
    author = {Ingo Wegene},
    title = {Complexity Theory},
    publisher = {Springer Berlin, Heidelberg},
    year = {2005},
}

@article{hagberg2008networkx,
    author =       {Hagberg, Aric A. and Schult, Daniel A. and Swart, Pieter J.},
    title =        {Exploring Network Structure, Dynamics, and Function using NetworkX},
    booktitle =   {Proceedings of the 7th Python in Science Conference},
    pages =     {11 - 15},
    year =   {2008},
}

@article{freeman1978centrality_conceptual_clarification,
	title = {Centrality in social networks conceptual clarification},
	volume = {1},
	number = {3},
	journal = {Social Networks},
	author = {Freeman, Linton C.},
	year = {1978},
	pages = {215--239},
}

@book{hatcher2002algebraictopology,
    author = {Hatcher, Allen},
    title = {Algebraic Topology},
    publisher = {Cambridge University Press},
    year = {2002}
}

@data{atran2009philippinesambassador,
    author = {S. Atran et al.},
    title = {Philippines Ambassador Residence Bombing 2000, Jakarta.},
    year = {2009},
    publisher = {John Jay & ARTIS Transnational Terrorism Database},
}

@article{mcauley2012discovering_social_circles,
  author       = {Julian McAuley and
                  Jure Leskovec},
  title        = {Discovering Social Circles in Ego Networks},
      year = {2014},
    publisher = {Association for Computing Machinery},
    volume = {8},
    number = {1},
  journal = {ACM Transactions on Knowledge Discovery from Data (TKDD)},

}

@article{knight2011internet_topology_zoo,
  author={Knight, Simon and Nguyen, Hung X. and Falkner, Nickolas and Bowden, Rhys and Roughan, Matthew},
  journal={IEEE Journal on Selected Areas in Communications}, 
  title={The Internet Topology Zoo}, 
  year={2011},
  volume={29},
  number={9},
  pages={1765--1775},
}

@article{grant_1973_dominance,
	title = {Dominance and association among members of a captive and a free-ranging group of grey kangaroos ({Macropus} giganteus)},
	volume = {21},
	number = {3},
	journal = {Animal Behaviour},
	author = {Grant, T. R.},
	year = {1973},
	pages = {449--456},
}

@data{kaminski2018moviegalaxies,
author = {Kaminski, Jermain and Schober, Michael and Albaladejo, Raymond and Zastupailo, Oleksandr and Hidalgo, César},
publisher = {Harvard Dataverse},
title = {Moviegalaxies - Social Networks in Movies},
year = {2018},
version = {V3},
}

@article{read1954cultures_central_highlands,
    author = {K. E. Read},
    title = {Cultures of the Central Highlands, {N}ew {G}uinea},
    journal = {Journal of Anthropological Research},
    year = {1954},
    volume = {10},
    number = {1}
}

@article{rhodes2009inferring_missing_links,
	title = {Inferring missing links in partially observed social networks},
	volume = {60},
	number = {10},
	journal = {Journal of the Operational Research Society},
	publisher = {Taylor \& Francis},
	author = {Rhodes, C. J. and Jones, P.},
	year = {2009},
	pages = {1373--1383},
}

@book{hobson2013evolution_modern_capitalism,
    author = {Hobson, J. A.},
    title = {The Evolution of Modern Capitalism (Routledge Revivals)},
    subtitle = {A Study of Machine Production},
    publisher = {Routledge},
    year = {2013},
}

@article{gelardi2020measuring_social_networks_primates,
	title = {Measuring social networks in primates: wearable sensors versus direct observations},
	volume = {476},
	number = {2236},
	journal = {Proceedings of the Royal Society A: Mathematical, Physical and Engineering Sciences},
	author = {Gelardi, Valeria and Godard, Jeanne and Paleressompoulle, Dany and Claidiere, Nicolas and Barrat, Alain},
	year = {2020},
}

@misc{bunch2021weighting_vectors_boundary_detection,
    author = {Bunch, Eric and Kline, Jeffery and Dickinson, Daniel and Bhat, Suhaas and Fung, Glenn},
    title = {Weighting vectors for machine learning: numerical
harmonic analysis applied to boundary detection},
    year = {2021},
    eprint={2106.00827},
      archivePrefix={arXiv},
}

@book{read1998atlas_of_graphs,
	title = {An Atlas Of Graphs},
	publisher = {Oxford University Press},
	author = {Read, Ronald C and Wilson, Robin J},
	year = {1998},
}

\clearpage

\appendix

\section{Implementation}
\label{app:code}
We briefly give some further explanations on how we compute the (Eulerian) magnitude homology centrality. The main part of the algorithm is to obtain the generators of the respective (Eulerian) magnitude chains. We have implemented a naive algorithm using recursion to achieve this, see \Cref{alg:buildgenerator}. With this part solved, we generate all possible partitions and construct all generators of the (Eulerian) magnitude chains. Then we can obtain the matrix representing the differential and finally obtain the dimension of the homology by computing the ranks of the corresponding differential matrices. We provide the full code on \href{https://github.com/aidos-lab/mh_cent.git}{GitHub}. For our naive implementation, we already obtain a complexity of $\mathcal{O}(n^k)$ for computing all generators with a given starting vertex and a given partition of $l$ into $k$ summands in the standard case and $\mathcal{O}((n\cdot k)^k)$ in the Eulerian case, since we need to check that each vertex appears only once in the generator. 

\begin{algorithm}
    \caption{Given a partition $\mathrm{par}$ of an integer $l\geq 0$ into precisely $k\geq$ summands (with the convention that $\mathrm{par}$ is empty for $k=l=0$), a graph $G=(V,E)$, a vertex $x_0 \in V$, returns all generators of the (Eulerian) magnitude chain group of $G$ that start with vertex $v_0$ and such that the distances between consecutive vertices correspond to the given partition $\mathrm{par}$}
    \label{alg:buildgenerator}
    \begin{algorithmic}
        \Function{BuildGenerator}{$G, x_0, \mathrm{par}, \mathrm{Eulerian}$}
            \State $\mathrm{generators} \gets []$
            \If{$\mathrm{par}$ is empty} \Comment{Treat base cases}
                \State append $(x_0)$ to $\mathrm{generators}$
            \ElsIf{$\mathrm{par}$ contains one element}
                \State $X_1 \gets$ vertices $x_1 \in \V(G)$ with distance $d(x_0, x_1) = \mathrm{par}[0]$
                \For{$x_1$ in $X_1$}
                    \State append $(x_0,x_1)$ to $\mathrm{generators}$
                \EndFor
            \Else \Comment{Recursively build generators}
                \State $X_1 \gets$ vertices $x_1 \in \V(G)$ with distance $d(x_0, x_1) = \mathrm{par}[0]$ 
                \For{$x_1$ in $X_1$}
                    \State $\mathrm{tails} \gets \Call{BuildGenerator}{G, x_1, \mathrm{par}[1:], \mathrm{Eulerian}}$
                    \For{$\mathrm{tail}$ in $\mathrm{tails}$}
                        \If{Eulerian}
                            \If{$x_0 \notin \mathrm{tail}$}
                                \State insert $x_0$ at the beginning of $\mathrm{tail}$ and append the tuple to $\mathrm{generators}$
                            \EndIf
                        \Else
                            \State insert $x_0$ at the beginning of $\mathrm{tail}$ and append $\mathrm{tail}$ to $\mathrm{generators}$
                        \EndIf
                    \EndFor
                \EndFor
            \EndIf
        \State \Return $\mathrm{generators}$
        \EndFunction
        \end{algorithmic}
\end{algorithm}

\end{document}